\newcolumntype{.}{D{.}{.}{1.3}}
\newcommand\dashrule{\leavevmode\xleaders\hbox{-}\hfill\kern0pt}
\def\diag{\operatorname{diag}}
\newcommand{\ba}{{\bvec{a}}}
\newcommand{\bg}{\bvec{g}}
\newcommand{\bh}{\bvec{h}}
\newcommand{\bq}{\bvec{q}}
\newcommand{\bs}{\bvec{s}}
\newcommand{\bu}{\bvec{u}}
\newcommand{\bv}{\bvec{v}}
\newcommand{\bx}{\bvec{x}}
\newcommand{\bA}{{\bf A}}
\newcommand{\bB}{{\bf B}}
\newcommand{\bG}{{\bf G}}
\newcommand{\bH}{{\bf H}}
\newcommand{\bI}{{\bf I}}
\newcommand{\bN}{{\bf N}}
\newcommand{\bP}{{\bf P}}
\newcommand{\bQ}{{\bf Q}}
\newcommand{\bS}{{\bf S}}
\newcommand{\bU}{{\bf U}}
\newcommand{\bX}{{\bf X}}
\newcommand{\calO}{{\mathcal{O}}}
\newcommand{\bbeta}{\mbox{\boldmath $\beta$}}
\newcommand{\bgamma}{\mbox{\boldmath $\gamma$}}
\newcommand{\blambda}{\mbox{\boldmath $\lambda$}}
\newcommand{\bphi}{\mbox{\boldmath $\phi$}}
\newcommand{\bpsi}{\mbox{\boldmath $\psi$}}
\newcommand{\bGamma}{\mbox{\boldmath $\Gamma$}}
\newcommand{\bPhi}{\mbox{\boldmath $\Phi$}}
\newcommand{\bPsi}{\mbox{\boldmath $\Psi$}}
\newcommand{\Real}{\mathbb R}
\newcommand{\1}{\mbox{\boldmath $1$}}
\newcommand{\be}{\begin{eqnarray}}
\newcommand{\ee}{\end{eqnarray}}
\newcommand{\matrixb}{\left[ \begin{array}}
\newcommand{\matrixe}{\end{array} \right]}
\newcommand{\tr}{\mathop{\rm tr}\nolimits}
\def\*{\circledast}
\newtheorem{definition}{Definition}
\newtheorem{remark}{Remark}
\newtheorem{lemma}{Lemma}
\newcommand{\bvec}[1]{\boldsymbol{#1}}
\def\vectorize{\operatorname{vec}}
\newcommand{\vtr}[1]{\vectorize\hspace{-.3ex}\left(#1\right)}
\newcommand{\tensor}[1]{\boldsymbol{\mathscr{\MakeUppercase{#1}}}} %tensor
\newcommand{\tA}{\tensor{A}}
\newcommand{\tB}{\tensor{B}}
\newcommand{\tC}{\tensor{C}}
\newcommand{\tE}{\tensor{E}}
\newcommand{\tG}{\tensor{G}}
\newcommand{\tH}{\tensor{H}}
\newcommand{\tP}{\tensor{P}}
\newcommand{\tX}{\tensor{X}}
\newcommand{\tY}{\tensor{Y}}
\def\bigcircledast{\mathop{\mbox{\fontsize{18}{19}\selectfont $\circledast$}}}
\def\bigCircleDast{\operatornamewithlimits{\bigcircledast}}
\renewcommand{\bigodot}{\mathop{\mbox{\fontsize{18}{19}\selectfont$\odot$}}}
\renewcommand{\ltimes}{\mathlarger{\mathlarger{\ltimes}}}%%
\newcommand{\ttprod}{\bullet}
\title{Tensor Networks for Latent Variable Analysis: Higher Order Canonical Polyadic Decomposition}
\author{\vspace{-.3ex}Anh-Huy Phan$^{*}$, Andrzej Cichocki,
Ivan Oseledets, Salman Ahmadi Asl, Giuseppe Calvi,
and Danilo Mandic
\thanks{A.-H. Phan, A. Cichocki, I. Oseledets and S. Ahmadi Asl are with Skolkovo Institute of Science and Technology (Skoltech), Russia}% <-this % stops a space
\thanks{G. Calvi and D. Mandic are with Imperial College, London, United Kingdom, email: d.mandic@imperial.ac.uk.}
}
\renewcommand{\CommentSty}[1]{\fontsize{8.7}{9.8}\selectfont\textnormal{\texttt{#1}}\unskip}%%
\newcounter{example}
\newenvironment{example}%                  environment name
{\refstepcounter{example}\vspace{10pt}\par\noindent
\textbf{Example \theexample.  }
}% begin code
{}%
\newsavebox{\@brx}
\newcommand{\llangle}[1][]{\savebox{\@brx}{\(\m@th{#1\langle}\)}%
  \mathopen{\copy\@brx\kern-0.5\wd\@brx\usebox{\@brx}}}
\newcommand{\rrangle}[1][]{\savebox{\@brx}{\(\m@th{#1\rangle}\)}%
  \mathclose{\copy\@brx\kern-0.5\wd\@brx\usebox{\@brx}}}
\begin{document}
\maketitle

\begin{abstract}
The Canonical Polyadic decomposition (CPD) is a convenient and intuitive tool for tensor factorization; however, for higher-order tensors, it often exhibits high computational cost and permutation of tensor entries, these undesirable effects grow exponentially with the tensor order.
Prior compression of tensor in-hand can reduce the computational cost of CPD, but this is only applicable when the rank $R$ of the decomposition does not exceed the tensor dimensions. To resolve these issues, we present a novel method for CPD of higher-order tensors, which rests upon a simple tensor network of representative inter-connected core tensors of orders not higher than 3. 
For rigour, we develop an exact conversion scheme from the core tensors to the factor matrices in CPD, and an iterative algorithm with low complexity to estimate these factor matrices for the inexact case. Comprehensive simulations over a variety of scenarios support the approach.
\end{abstract}

\section{Introduction}

The widespread use of sensor technology and the ever increasing size and complexity  of modern data sets have exposed the limitations of classic linear algebra and the associated flat-view operation of matrix and vector models. This has also highlighted the need for more sophisticated analysis tools capable of coping with the sheer volume associated with Big Data paradigms. Owing to their flexibility and scalability in dealing with multi-way data, higher-order generalizations of matrices, referred to as tensors, have found applications in a wide spectrum of disciplines, ranging from the very theoretical, such as mathematics and physics, to the more practical aspects of signal processing and neuroscience.

The success of tensor algebra has been intimately associated with the efficient way tensor operations deal with the curse of dimensionality. In other words, for tensors in a raw format, the application of standard numerical methods may be intractable, as the required storage memory and a number of operations grow exponentially with the tensor order. To tackle this issue, tensor decompositions aim to represent higher-order tensors through multi-way operations over their latent components. The Canonical Polyadic Decomposition (CPD) is one of such popular methods which factorize a higher-order tensor as the sum of a finite number of rank-one tensors. 
This tensor decomposition was first studied by Hitchcock in 1927\cite{Hitchcock1927}, and was later known as parallel factor analysis (PARAFAC), a tool for chemometric analysis popularized by Harshman \cite{Harshman72}, Carroll and Chang \cite{Carroll_Chang}, and Kruskal\cite{kruskal77}. Since the 1990's, the CPD has attracted attention from signal processing researchers as e.g., the receiving signals in telecommunication and blind source separation often admit the model \cite{Sidiropoulos00Bro,YeredorCaf,Comon20062271,sorensenVDM}. The recent rapid development in machine learning has opened up new applications of CPD in feature extraction, data reconstruction, image completion  \cite{8421043,8305626,BayesCPD-TNNLS,8116755} and various tracking scenarios\cite{sid_adPARAFAC,Phan_tensordeflation_alg}. Compared to other tensor decompositions, the CPD exhibits a great advantage in dimensionality reduction. 
For example, the low-rank tensor approximation of the parameters in convolutional and fully connected layers can accelerate the inference process of convolutional neural networks\cite{DBLP:journals/corr/JaderbergVZ14,DBLP:journals/corr/LebedevGROL14}.
The CPD is also useful for determining the complexity of matrix multiplication, i.e., finding the smallest number of the scalar multiplications required for the multiplication of two matrices; indeed this problem corresponds to finding the rank of certain tensors\cite{Strassen:1969:GEO:2722431.2722798,TICHAVSKY2017362}. 

Over its long history, many researchers have deeply studied the CPD and its properties, including uniqueness and stability.
Efficient algorithms for calculating CPD have also been developed,
and the model has been extended with various additional constraints in order to promote interpretability or to avoid degeneracy. 
%Most state-of-the-art algorithms for CPD can be found in open-source toolboxes, e.g., the Tensor toolbox\cite{tensortoolbox26}, the TensorBox package\cite{tensorbox} and the TensorLab\cite{Sorber-tensorlab}. 
%
Despite the great successes, there are still many challenging problems in the CP tensor representation/decomposition.
The main stumbling block in CPD for big data is that its computation for large volume and high order tensors, e.g., those of order $N$ = 10 or 20, is rather complicated and involves numerous technical issues.
First and foremost, most algorithms for CPD rest upon some kind of matricization, that is, the tensor at hand is first flattened to a set of matrices, then the cost function for the tensor decomposition is converted to the objective functions for matrix approximation. While this simplifies the optimization problem and helps to straightforwardly derive update rules for factor matrices, it also gives rise to another issue -- that of permutation of tensor entries. In addition, the higher the order of a tensor, the greater the computational cost for the tensor unfoldings\cite{Phan_FastALS,doi:10.1137/14097968X}.

The curse of dimensionality associated with higher-order data structures also means that computational costs of most existing algorithms for CPD increase exponentially with the tensor order.
For example, the computationally cheapest algorithm for CPD of an order-$N$ tensor which employs the Alternating Least Squares (ALS) updates has a computational cost of $\calO(N R \prod_n I_n)$ or $\calO(N R I^N)$, assuming that the mode dimensions are equal $I_1 = \cdots = I_N = I$ and the estimated rank is $R$\cite{Comon-ALS09,Phan_FastALS}. This explains why most algorithms for CPD are efficient only for tensors of order-3. The CPD for higher-order tensors is therefore routinely performed by reshaping a higher-order tensor into an order-3 tensor followed by a CP decomposition and calculation of the loading components \cite{Phan_FCP,Bhaskara:2014:SAT:2591796.2591881,doi:10.1137/16M1090132}.
Alternatives to a ``direct'' CPD computation have been proposed, such as a prior compression of the tensor using e.g., the Tucker decomposition which can reduce the computational cost of CPD to $\calO(NR^{N+1})$. However, this is only applicable when the rank $R$ of the decomposition is smaller than the tensor dimensions. Moreover, the computational cost still remains high for $N \ge 4$.

This paper presents a novel method for the CP decomposition of higher-order tensors, which is particularly suited to tensors for which tensor rank exceeds tensor dimensions, a prohibitive case for the existing algorithms. The underlying idea behind our approach is to first approximate a tensor by a set of inter-connected core tensors of orders not higher than 3, followed by individual CPDs on such order-3 and low dimension cores. 
To this end, we employ the matrix product states (MPS) \cite{Klumper91,Vidal03}
which is also known as the Tensor Train decomposition \cite{OseledetsTT09}. We demonstrate that the factor matrices within the CPD of the original higher-order tensor are reliably estimated from the compressed TT model. This approach also offers enhanced physical interpretability as demonstrated by a bidirectional mapping between tensors in the CPD and TT formats. For an elegant mathematical formulation, and without a loss of generality, we initially consider the noiseless case, and subsequently extend the approach to accommodate for the presence of noise. For the latter scenario, novel iterative algorithms to estimate the factor matrices within CPD as a result of a prior TT decomposition.

In summary, contributions of this work are as follows:
\begin{itemize}
	\item Compression prior to CPD. This makes our approach possible to perform CPD, even when the rank exceeds tensor dimensions.
	\item For noiseless tensors, an exact mapping from the core tensors of a TT-representation of a given data tensor to the factor matrices of its CPD is established.
	\item For the noisy case, novel iterative algorithms are developed for estimation of factor matrices, with a cost of only $\mathcal{O}(N I R^3)$.
	\item It is demonstrated that the CPD gradients, which are the most computationally expensive operations within CPD algorithms, are now efficiently computed with a computational cost of $\mathcal{O}(I_nR^3)$.
\end{itemize}

The proposed approach is quite general and can serve a wide variety of purposes. For its validation, we consider case studies ranging from basic tensor decompositions to blind identification, blind source separation and low-rank approximations of Hilbert tensors. Comprehensive analysis over rigorous performance metrics conclusively demonstrates the effectiveness of the proposed method in not only efficiently and reliably computing CPDs but also estimating tensor ranks.

 \setlength{\algomargin}{1em}
\begin{algorithm}[t!]
%\SetAlgoInsideSkip
\SetFillComment
\SetSideCommentRight
\CommentSty{\footnotesize}
%\scriptsize
\caption{The {\tt{TT2CP}} algorithm\label{alg_TT_to_CPD}}
\DontPrintSemicolon \SetFillComment \SetSideCommentRight
\KwIn{Data tensor $\tY$:  $(I_1 \times I_2 \times \cdots \times I_d)$,  and rank-$R$
}
\KwOut{$N$ factor matrices $\bA^{(n)} \in \Real^{I_n \times R}$} \SetKwFunction{mreshape}{reshape}
\SetKwFunction{svds}{svds}
\SetKwFunction{tttocp}{TT\_to\_CPD}
\SetKwFunction{ttcpd}{TT\_CPD}
\Begin{
{\mtcc{{\color{blue}{\bf{Stage 1:}}} TT-decomposition of $\tY$ \dashrule}}
\nl $\tY \approx \tX = \tG_1 \ttprod \tG_2 \ttprod \cdots \ttprod \tG_{N-1} \ttprod \tG_N  $\;
{\mtcc{{\color{blue}{\bf{Stage 2:}}} Convert TT-tensor $\tX$ to a K-tensor of rank-$R$ for the exact model\dashrule}}
\nl $[\bB^{(1)}, \bB^{(2)},\ldots, \bB^{(N)}] = \tttocp(\tX,R)$\;
{\mtcc{{\color{blue}{\bf{Stage 3:}}} Fit a K-tensor of rank-$R$ to TT-tensor \dashrule}}
\nl $[\bA^{(1)}, \bA^{(2)},\ldots, \bA^{(N)}] = \ttcpd(\tX, [\bB^{(1)}, \bB^{(2)},\ldots, \bB^{(N)}])$
}
\end{algorithm}

\section{Preliminaries}
Throughout this paper, the element-wise division, Kronecker, Khatri--Rao (columnwise Kronecker), Hadamard and outer products are denoted, respectively, by $\oslash, \otimes, \odot, \*, \circ$ \cite{cichocki2016tensor}.
A column vector of unities of length $R$ is denoted by $\1_R$.

For convenience, we shall first introduce the definitions of tensor train contraction and tensor train, followed by equivalent representations of a TT-tensor and a Kruskal tensor.

\begin{definition}[Tensor train contraction\cite{Phan_TT_part1}]\label{def_train_contract}\label{def_boxtime}
Consider a tensor $\tA$ of size $I_1 \times I_2 \times \cdots \times I_N$ and a tensor $\tB$ of size  $J_1 \times J_2 \times \cdots \times J_K$.
The tensor train contraction performs a tensor contraction between the last mode of tensor $\tA$ and the first mode of tensor $\tB$, where $I_N = J_1$,
to yield a tensor $\tC = \tA \bullet \tB$ of size $I_1  \times \cdots \times I_{N-1} \times J_2 \times \cdots \times J_K$, the elements of which are given by \\[-1.6em]
\be
c_{i_1,\ldots,i_{N-1},j_2, \ldots,j_K} = \sum_{i_N = 1}^{I_N}  a_{i_1,\ldots,i_{N-1},i_N} \, b_{i_N,j_2, \ldots,j_K}  \notag .
\ee
%The train-contraction is denoted by $\tC = \tA \bullet \tB$.
\end{definition}\vspace{-.8ex}
%
%
%\begin{definition}[Tensor train contraction\cite{Phan_TT_part1}]\label{def_tencontract}
%Let $\tA$ is an order-$N$ tensor of size $I_1\times I_2\times \cdots \times I_N$ and $\tB$ is an order-$K$ tensor of size $J_1\times J_2\times \cdots \times J_K$. The train contraction performs a tensor contraction between the last mode of tensor $\tA$ and the first mode of tensor $\tB$, where $I_N=J_1$, to yield a tensor $\tC=\tA\ttprod\tB$ of size $I_1\times \cdots \times I_{N-1}\times J_{N+1}\times\cdots\times J_K$, the elements of which are given by
%\be
%c_{i_{n+1},\cdots,i_N,j_{N+1},\cdots,j_N}=\sum\limits_{{i_1}}^{{I_1}} { \ldots \sum\limits_{{i_n}}^{{I_n}} {{a_{{i_1}, \ldots ,{i_n},{i_{n + 1}}, \ldots ,{i_N}}}{b_{{i_1}, \ldots ,{i_n},{j_{n + !}}, \ldots ,{j_K}}}.} }
%\ee
%\end{definition}

\begin{definition}[Kruskal tensor (K-tensor)]\label{def_ktensor}
A Kruskal tensor or K-tensor of order-$N$, denoted by $\tX$, is composed of factor matrices $\bA^{(n)}  = [\ba^{(n)}_1, \ldots , \ba^{(n)}_R] \in \Real^{I_n \times R}$ which have $R$ columns, and is defined as
\be
\tX = \sum_{r= 1}^{R} \lambda_r \, \ba^{(1)}_r \circ \ba^{(2)}_r \circ \cdots \circ \ba^{(N)}_r \notag.
\ee
The K-tensor can also be expressed as
$
\tX = \llbracket \blambda; \bA^{(1)}, \bA^{(2)}, \ldots, \bA^{(N)} \rrbracket $, 
where $\blambda = [\lambda_1, \ldots, \lambda_R]^T >0$ and $\|\ba^{(n)}_r\|^2 = 1$ for  all $n$ and all $r$.
When $\blambda = \1_R$, the K-tensor $\tX$ is simply expressed as
\be
\tX = \llbracket  \bA^{(1)}, \bA^{(2)}, \ldots, \bA^{(N)} \rrbracket \notag.
\ee
\end{definition}

\begin{figure}[t]
\centering
\psfrag{X1}[lb][lb]{\scalebox{1}{\color[rgb]{0,0,0}\setlength{\tabcolsep}{0pt}\hspace{0cm}\begin{tabular}{c}$\tG_1$\end{tabular}}}%
\psfrag{X2}[lb][lb]{\scalebox{1}{\color[rgb]{0,0,0}\setlength{\tabcolsep}{0pt}\hspace{0cm}\begin{tabular}{c}$\tG_2$\end{tabular}}}%
\psfrag{X3}[cb][cb]{\scalebox{1}{\color[rgb]{0,0,0}\setlength{\tabcolsep}{0pt}\hspace{0cm}\begin{tabular}{c}$\tG_3$\end{tabular}}}%
\psfrag{Xn2}[cb][cb]{\scalebox{1}{\color[rgb]{0,0,0}\setlength{\tabcolsep}{0pt}\hspace{0cm}\begin{tabular}{c}$\tG_{N-2}$\end{tabular}}}%
\psfrag{Xn1}[cb][cb]{\scalebox{1}{\color[rgb]{0,0,0}\setlength{\tabcolsep}{0pt}\hspace{0cm}\begin{tabular}{c}$\tG_{N-1}$\end{tabular}}}%
\psfrag{XN}[cb][cb]{\scalebox{1}{\color[rgb]{0,0,0}\setlength{\tabcolsep}{0pt}\hspace{0cm}\begin{tabular}{c}$\tG_{N}$\end{tabular}}}
\psfrag{I1}[cc][cb]{\scalebox{1}{\color[rgb]{0,0,0}\setlength{\tabcolsep}{0pt}\hspace{0cm}\small\begin{tabular}{c}\\[-2em]$I_1$\end{tabular}}}%
\psfrag{I2}[cc][cb]{\scalebox{1}{\color[rgb]{0,0,0}\setlength{\tabcolsep}{0pt}\hspace{0cm}\small\begin{tabular}{c}\\[-2em]$I_2$\end{tabular}}}%
\psfrag{I3}[cc][cb]{\scalebox{1}{\color[rgb]{0,0,0}\setlength{\tabcolsep}{0pt}\hspace{0cm}\small\begin{tabular}{c}\\[-2em]$I_3$\end{tabular}}}%
\psfrag{IN}[cc][cb]{\scalebox{1}{\color[rgb]{0,0,0}\setlength{\tabcolsep}{0pt}\hspace{0cm}\small\begin{tabular}{c}\\[-2em]$I_N$\end{tabular}}}%
\psfrag{In1}[cc][cb]{\scalebox{1}{\color[rgb]{0,0,0}\setlength{\tabcolsep}{0pt}\hspace{0cm}\small\begin{tabular}{c}\\[-2em]$I_{N-1}$\end{tabular}}}%
\psfrag{In2}[cc][cb]{\scalebox{1}{\color[rgb]{0,0,0}\setlength{\tabcolsep}{0pt}\hspace{0cm}\small\begin{tabular}{c}\\[-2em]$I_{N-2}$\end{tabular}}}%
\psfrag{R1}[cc][cc]{\scalebox{1}{\color[rgb]{0,0,0}\setlength{\tabcolsep}{0pt}\hspace{0cm}\footnotesize\begin{tabular}{c}$R_{1}$\end{tabular}}}%
\psfrag{R2}[cc][cc]{\scalebox{1}{\color[rgb]{0,0,0}\setlength{\tabcolsep}{0pt}\hspace{0cm}\footnotesize\begin{tabular}{c}$R_{2}$\end{tabular}}}%
\psfrag{R3}[cc][cc]{\scalebox{1}{\color[rgb]{0,0,0}\setlength{\tabcolsep}{0pt}\hspace{0cm}\footnotesize\begin{tabular}{c}$R_{3}$\end{tabular}}}%
\psfrag{Rn2}[cc][cc]{\scalebox{1}{\color[rgb]{0,0,0}\setlength{\tabcolsep}{0pt}\hspace{0cm}\footnotesize\begin{tabular}{c}$R_{N-2}$\end{tabular}}}%
\psfrag{Rn1}[cc][cc]{\scalebox{1}{\color[rgb]{0,0,0}\setlength{\tabcolsep}{0pt}\hspace{0cm}\footnotesize\begin{tabular}{c}$R_{N-1}$\end{tabular}}}%
%
%\subfigure[Representation of a tensor in the TT-format.]
{\includegraphics[width=.75\linewidth]{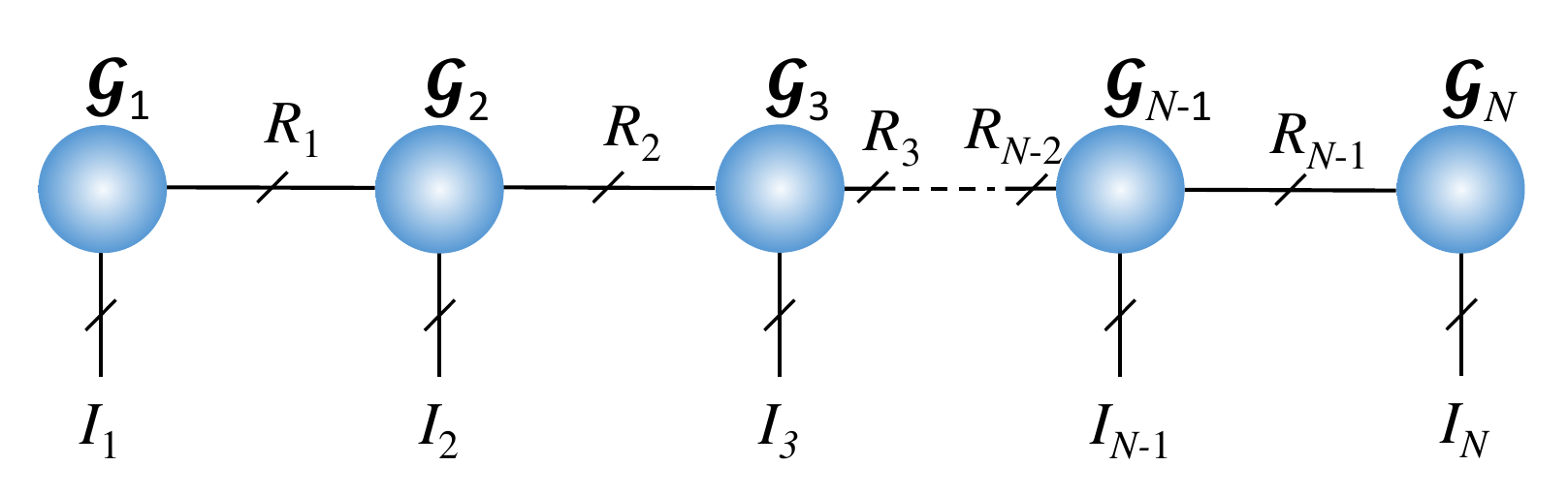}\label{tt_6cores}}
%{\includegraphics[width=.4\textwidth]{tt_6cores}\label{tt_6cores}}
%\\
%%
%\psfrag{A2}[rb][rb]{\scalebox{1}{\color[rgb]{0,0,0}\setlength{\tabcolsep}{0pt}\hspace{0cm}\begin{tabular}{c}$\bA_2$\end{tabular}}}%
%\psfrag{A3}[rb][rb]{\scalebox{1}{\color[rgb]{0,0,0}\setlength{\tabcolsep}{0pt}\hspace{0cm}\begin{tabular}{c}$\bA_3$\end{tabular}}}%
%\psfrag{An2}[rb][rb]{\scalebox{1}{\color[rgb]{0,0,0}\setlength{\tabcolsep}{0pt}\hspace{0cm}\begin{tabular}{c}$\bA_{N-2}$\end{tabular}}}%
%\psfrag{An1}[rb][rb]{\scalebox{1}{\color[rgb]{0,0,0}\setlength{\tabcolsep}{0pt}\hspace{0cm}\begin{tabular}{c}$\bA_{N-1}$\end{tabular}}}%
%\subfigure[CPD of the core tensors $\tG_n$ to convert a TT to K-tensor.]{\includegraphics[width=.65\textwidth]{tttocpd}\label{tttocpd}}
\vspace{-1em}
\caption{Representation of a tensor in the TT-format.}
\label{fig_TT6}
\end{figure}

\begin{definition}[Tensor train or TT-tensor\cite{OseledetsTT09}]\label{def_tt_tensor}
The TT representation of an order-$N$ tensor, $\tX$, of size $I_1 \times I_2 \times \cdots \times I_N$ employs $N$ core tensors, $\tG_1$, $\tG_2$, \ldots, $\tG_N$, whereby every $\tG_n$ is of size $R_{n-1} \times I_n \times R_{n}$, and $R_0 = R_N = 1$, to assume the following form
\be
\tX = \sum_{r_1 = 1}^{R_1} \, \sum_{r_2 = 1}^{R_2}  \cdots \sum_{r_{N-1} = 1}^{R_{N-1}} \tG_1(:,r_1) \circ \tG_2(r_1,:,r_2) \circ \cdots \circ \tG_N(r_{N-1},:), \notag
\ee
where $\tG_n(r_{n-1},:,r_n)$ are vertical fibers of $\tG_n$ and $(R_1, R_2, \ldots,R_{N-1})$ represents the TT-rank of $\tX$ (see also Fig.~\ref{fig_TT6}).
\end{definition}
Since the first and last core tensors, $\tG_1$ and $\tG_N$, are matrices, they can also be represented as $\bG_1$ and $\bG_N$, respectively. 
The TT- decomposition can also be expressed through contractions between the core tensors, $\tG_n$, for $n = 1, \ldots, N$, that is\\[-1.2em]
\be
	\tY \approx  \tG_1 \ttprod \tG_2 \ttprod \cdots \ttprod \tG_{N-1} \ttprod \tG_N   \label{equ_TT_boxcross}.
\ee\\[-1.2em]
Fig.~\ref{fig_TT6} illustrates a representation of an order-$N$ tensor in the TT-format.
A tensor train decomposition can be computed efficiently using the sequential projection algorithm \cite{Vidal03,OseledetsTT09}, or the alternating single or multiple core update algorithm, while the ranks of the decomposition can be determined based on an error bound of the approximation\cite{Phan_TT_part1}.

\begin{figure}[t]
\centering
{\includegraphics[width=1\linewidth,trim = 0.0cm 6cm 0cm 0cm,clip=true]{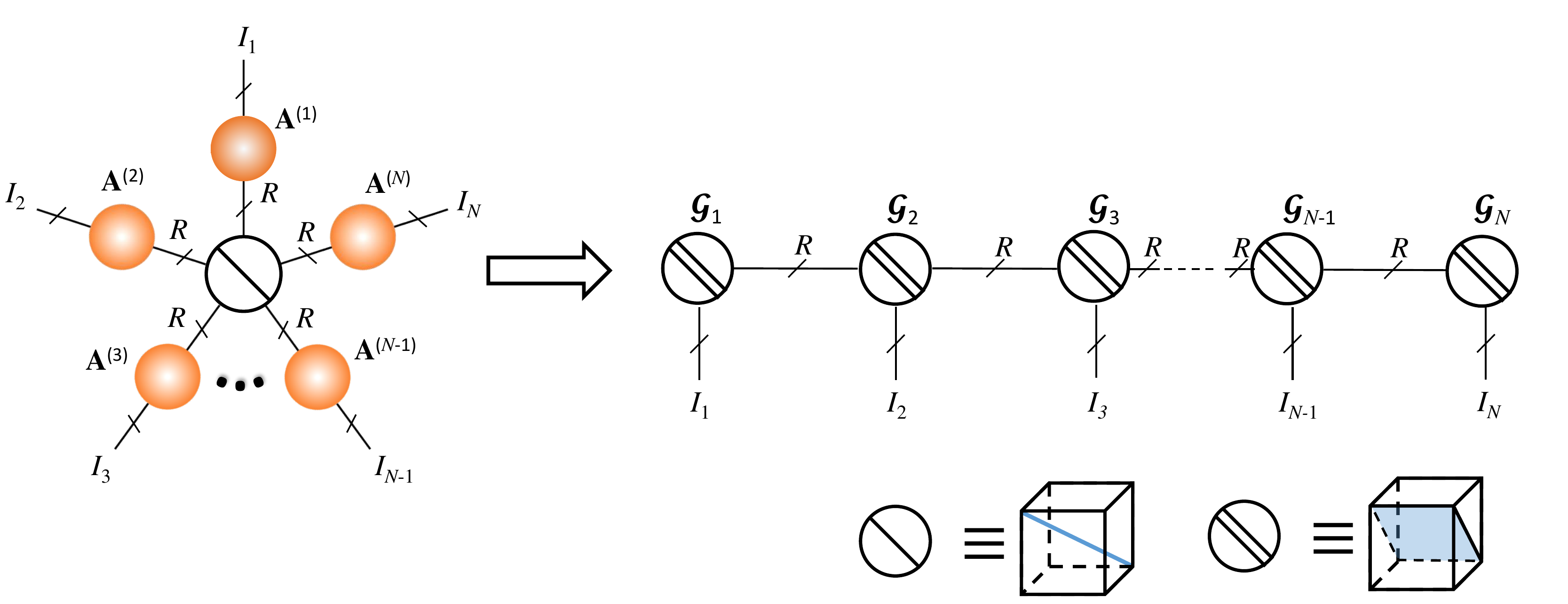}\label{fig_lem1}}
\vspace{-1.5em}
\caption{TT representation of a tensor in the Kruskal format. See also Lemma~\ref{lem_Kt_to_TT}.}
\label{fig_lem1}
\end{figure}

\begin{definition}[Tensor unfolding (flattening, matricization)]\label{def_flat}
The procedure of converting a tensor to a matrix is called tensor unfolding or equivalently flattening or matricization. For an order-$N$ tensor $\tX$, of size $I_1\times I_2 \cdots\times I_N$, its unfolding with respect to mode $n$ is denoted by $\bX_{(n)}$ and yields a matrix of dimension $I_n\times (I_1 I_2 \cdots I_{n-1} I_{n+1} \cdots I_N)$ whose $i_n$-th row represents vectorization of the sub-tensor $\tX(:, \ldots, :,i_n, :,\ldots,:)$.
\end{definition}

%%\def\baselinestretch{1}
%\setlength{\algomargin}{1em}
%\begin{algorithm}[Ht!]
%%\SetAlgoInsideSkip
%\SetFillComment
%\SetSideCommentRight
%\CommentSty{\footnotesize}
%%\scriptsize
%\caption{{\tt{TT-SVD for tensor train decomposition\cite{Vidal03,OseledetsTT09}}}\label{alg_TT_svd}}
%\DontPrintSemicolon \SetFillComment \SetSideCommentRight
%\KwIn{Data tensor $\tY$:  $(I_1 \times I_2 \times \cdots \times I_N)$, rank $(R_1,R_2,\ldots,R_N)$ ($R_1 =  R_N = 1$)
%}
%\KwOut{cores $\tG_{n} \in \Real^{R_n \times I_n \times R_{n+1}}$} \SetKwFunction{mreshape}{reshape}
%\SetKwFunction{svds}{svds}
%\Begin{
%% {\mtcc{{\color{blue}{\bf{Stage 1:}}} Generate a random rank-1 tensor\dashrule}}
%\For {$n = 1, \ldots, N-1$}{
%\nl 	$\bX = \mreshape(\tY,I_n\,R_n,[])$\;
%\nl 	$[\bU,\bSigma,\bV] = \svds(\bX,R_{n+1})$\tcc*{$\bX \approx \bU \bSigma \bV^T$}
%\nl 	$\tG_n = \mreshape(\bU,R_n,I_n,R_{n+1})$\;
%\nl 	$\tY \leftarrow \bSigma\bV^T$\;
%}
%\nl $\tG_{N} = \tY$\;
%}
%\end{algorithm}

\section{Tensor Compression using Tensor Train Decomposition}
\label{sec:TT_compression}

The key idea which underpins our proposed method is to benefit from the super-compression of the large original data tensor through Tensor Train decomposition.

\begin{remark}
The proposed method is physically justified by the properties of the TT-format, whereby any rank-$R$ tensor has an equivalent TT-representation of rank-$(R,\ldots,R)$.
\end{remark}

\subsection{TT-representation of a K-tensor}

\begin{lemma}[\bf TT-representation of a K-tensor \cite{OseledetsTT11}]\label{lem_Kt_to_TT}
A K-tensor $\tY = \llbracket  \bA^{(1)}, \bA^{(2)}, \ldots, \bA^{(N)} \rrbracket $ of rank-$R$ can be expressed in a TT-format in (\ref{equ_TT_boxcross}) as
$\tY =   \tG_1 \ttprod  \tG_2 \ttprod \cdots \ttprod \tG_{N-1} \ttprod \tG_N$,
where the core tensors $\tG_n$ are of size $R \times I_n \times R$, for $n = 2, \ldots, N-1$ and $\tG_1 = \bA^{(1)}$, $\tG_{N} = \bA^{(N)}$.
The $i$-th vertical slices of the core tensors $\tG_n$ are diagonal matrices of the $i$-th rows $\bA^{(n)}(i,:)$ for $i = 1, 2, \ldots, I_n$, that is\\[-1.7em]
\be
	\tG_n(:,i,:) = \diag(\bA^{(n)}(i,:))   \,. \notag
\ee
\end{lemma}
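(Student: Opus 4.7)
The plan is to verify the claimed identity by computing a generic entry of the proposed TT-tensor $\tG_1 \ttprod \tG_2 \ttprod \cdots \ttprod \tG_N$ and showing it equals the corresponding entry of the K-tensor $\tY = \llbracket \bA^{(1)}, \ldots, \bA^{(N)} \rrbracket$. The whole argument is a bookkeeping exercise that exploits the diagonal structure of the interior slices, so there is no real obstacle; the only thing to be careful about is the boundary convention $R_0 = R_N = 1$, so that $\tG_1$ and $\tG_N$ reduce to the matrices $\bA^{(1)}$ and $\bA^{(N)}$.

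First I would expand the TT contraction elementwise using Definition~\ref{def_tt_tensor}, writing
\begin{equation*}
\tY(i_1,\ldots,i_N) = \sum_{r_1=1}^{R}\cdots\sum_{r_{N-1}=1}^{R} \tG_1(i_1,r_1)\,\tG_2(r_1,i_2,r_2)\cdots \tG_N(r_{N-1},i_N).
\end{equation*}
Next I would substitute the definition $\tG_n(r_{n-1},i_n,r_n) = \bA^{(n)}(i_n,r_{n-1})\,\delta_{r_{n-1},r_n}$ for $n = 2, \ldots, N-1$, which follows from the assumption that the $i_n$-th vertical slice of $\tG_n$ is $\diag(\bA^{(n)}(i_n,:))$. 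Likewise, $\tG_1(i_1,r_1) = \bA^{(1)}(i_1,r_1)$ and $\tG_N(r_{N-1},i_N) = \bA^{(N)}(i_N,r_{N-1})$ by the boundary identifications.

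Substituting these into the multiple sum, the Kronecker deltas from the interior cores force the chain of equalities $r_1 = r_2 = \cdots = r_{N-1}$, collapsing the $N-1$ summations into a single one. Writing $r$ for the common index, the entry becomes
\begin{equation*}
\tY(i_1,\ldots,i_N) \;=\; \sum_{r=1}^{R} \bA^{(1)}(i_1,r)\,\bA^{(2)}(i_2,r)\cdots \bA^{(N)}(i_N,r),
\end{equation*}
which is exactly the entrywise expression of the K-tensor $\llbracket \bA^{(1)}, \ldots, \bA^{(N)} \rrbracket$ from Definition~\ref{def_ktensor}. Since the identity holds for every multi-index $(i_1,\ldots,i_N)$, the two tensors coincide, establishing the claim. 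The only subtlety worth highlighting explicitly is that all internal TT-ranks equal $R$, which matches the remark that any rank-$R$ tensor admits a TT-representation of rank $(R,\ldots,R)$.
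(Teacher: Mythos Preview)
Your proof is correct and is essentially the same argument as the paper's, just run in the opposite direction: the paper starts from the K-tensor sum $\sum_r \ba^{(1)}_r\circ\cdots\circ\ba^{(N)}_r$, inserts Kronecker deltas $\delta_{r_{n-1},r_n}$ to split the single index into $N-1$ indices, and reads off the diagonal core slices, whereas you start from the TT contraction and use those same deltas to collapse back. Either way the content is the identity $\sum_{r_1,\ldots,r_{N-1}}\prod_n \delta_{r_{n-1},r_n}\,\bA^{(n)}(i_n,r_{n-1}) = \sum_r \prod_n \bA^{(n)}(i_n,r)$, so there is nothing to add.
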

\begin{proof}
For completeness, we provide a brief derivation, although the proof was first provided in \cite{OseledetsTT11}.
Consider a K-tensor \\[-1.8em]
\be
\tY &=&\sum_{r = 1}^{R}  \, \ba^{(1)}_{r}  \circ \ba^{(2)}_{r} \, \circ \cdots \,   \circ  \ba^{(N)}_{r}  \notag\\
&=& \sum_{r_1,r_2, \ldots, r_{N-1}}  \, \ba^{(1)}_{r_1}  \circ \delta_{r_1,r_2} \ba^{(2)}_{r_2} \, \circ \cdots \,   \circ \delta_{r_{N-1},r_{N}}\, \ba^{(N)}_{r_{N}}\, .  \notag
\ee
This implies that the horizontal fibers of the core tensors, $\tG_n$, $n=1,\ldots, N$ are given by
$
\bg^{(n)}_{r_n, r_{n+1}} =    \delta_{r_n,r_{n+1}}   \ba^{(n)}_{r_n}$.
As a result, we have $\tG_n(:,i,:) = \diag(\bA^{(n)}(i,:))$.
\end{proof}
Graphical illustration of the representain in Lemma~\ref{lem_Kt_to_TT} is given in Fig.~\ref{fig_lem1}.
The conversion in Lemma~\ref{lem_Kt_to_TT} indicates that if we fit a TT-tensor $\tG_1 \ttprod  \tG_2 \ttprod \cdots \ttprod \tG_{N-1} \ttprod \tG_N$ of rank-$(R,\ldots,R)$ to a higher-order tensor $\tY$, e.g., using the TT-SVD\cite{Vidal03,OseledetsTT09}, or the Alternating Single/Double/Trible Core Update algorithm\cite{Phan_TT_part1}, then the approximation error of this TT-tensor will not be worse than that of the best rank-$R$ tensor approximation of $\tY$, given by 
\be
	\|\tY -  \tG_1 \ttprod  \tG_2 \ttprod \cdots \ttprod \tG_N\|_F^2 \le \|\tY - \llbracket \bA^{(1)}, \bA^{(2)}, \ldots,  \bA^{(N)} \rrbracket \|_F^2   \notag \,.
\ee
For the exact case, i.e., when $\tY$ is of rank-$R$, we have
\be
	\tG_1 \ttprod  \tG_2 \ttprod \cdots \ttprod \tG_{N-1} \ttprod \tG_N  = \llbracket \bA^{(1)}, \bA^{(2)}, \ldots,  \bA^{(N)} \rrbracket. \notag
\ee
%In addition, as the tensor compression using HOSVD, algorithm to fit a TT-tensor to a tensor $\tY$ is based on the truncated SVD, and works fine for most cases.

\subsection{Kruskal representation of a TT-tensor}

\begin{figure}[t]
\centering
{\includegraphics[width=1\linewidth,trim = 0.0cm 5cm 0cm 0cm,clip=true]{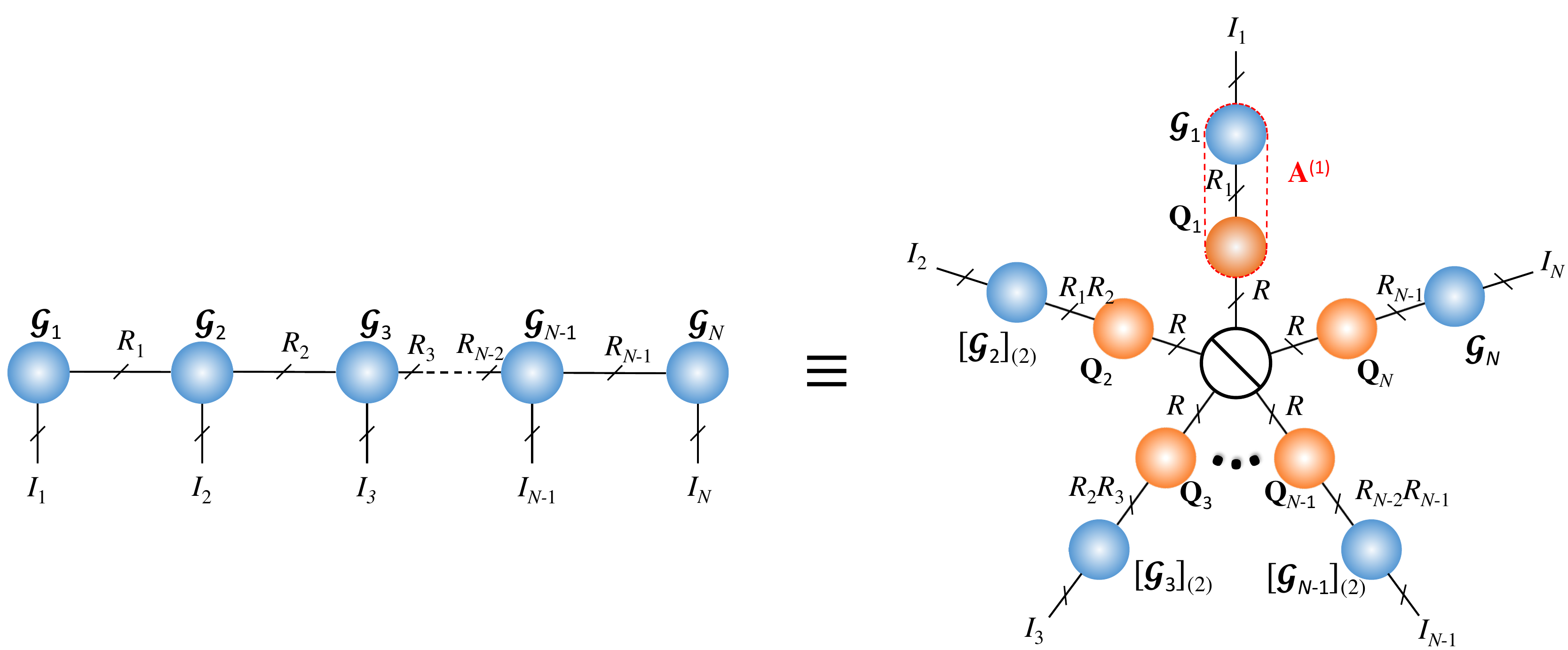}\label{fig_lem2}}
\vspace{-1.5em}
\caption{Kruskal representation of a tensor in the TT-format. $\bQ_n = \1_{R_{>n}}^T \otimes \bI_{R_{n-1}R_{n}} \otimes  \1_{R_{<n-1}}^T$ represents the dependence matrix. See also Lemma~\ref{lem_tt_to_kr}. 
}
\label{fig_lem2}
\end{figure}

\begin{lemma}[\bf Kruskal representation of a TT-tensor]\label{lem_tt_to_kr}
A TT-tensor $\tY =   \tG_1 \ttprod  \tG_2 \ttprod \cdots \ttprod \tG_{N-1} \ttprod \tG_N$ of rank-$(R_0,R_1,R_2, \ldots,R_N)$ has an equivalent Kruskal tensor representation with $R_1 R_2 \cdots R_N$ rank-1 tensors
\be
	 \tY = \llbracket  \bA^{(1)}, \bA^{(2)}, \ldots, \bA^{(N)} \rrbracket \notag
\ee
where the factor matrices $\bA^{(n)}$ are of size $I_n \times R_1 R_2 \cdots R_N$, and are given by
\be
\bA^{(n)} = [\tG_n]_{(2)} \left(\1_{R_{>n}}^T \otimes \bI_{R_{n-1}R_{n}} \otimes  \1_{R_{<n-1}}^T \right) \label{eq_An_tt_to_kr},
\ee
with $R_{<n} = R_0 R_1 \cdots R_{n-1}$ and $R_{>n} = R_{n+1} \cdots R_{N-1} R_N$.
\end{lemma}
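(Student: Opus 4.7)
The plan is to start directly from the elementwise definition of the TT-tensor in Definition~4 and reinterpret the nested sum as a Kruskal sum over the multi-index $(r_1,\ldots,r_{N-1})$. Writing
\begin{equation*}
\tY \;=\; \sum_{r_1=1}^{R_1}\cdots\sum_{r_{N-1}=1}^{R_{N-1}}\, \tG_1(:,r_1)\,\circ\,\tG_2(r_1,:,r_2)\,\circ\cdots\circ\,\tG_N(r_{N-1},:),
\end{equation*}
each summand is already a rank-1 tensor. Since $R_0=R_N=1$, the number of such rank-1 terms equals $R_1R_2\cdots R_{N-1}=R_1R_2\cdots R_N$, matching the claimed number of columns of $\bA^{(n)}$. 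This establishes existence of a Kruskal representation; what remains is to recognize each mode-$n$ factor vector as the appropriate column of $[\tG_n]_{(2)}$ and then to assemble these columns into the stated matrix product.

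Next, I would observe that in the summand indexed by $(r_1,\ldots,r_{N-1})$ the mode-$n$ vector $\tG_n(r_{n-1},:,r_n)$ depends only on the pair $(r_{n-1},r_n)$, and under the standard mode-2 unfolding it is exactly the column of $[\tG_n]_{(2)}$ indexed linearly by $(r_{n-1},r_n)\in[R_{n-1}]\times[R_n]$. Thus building $\bA^{(n)}$ amounts to replicating the $R_{n-1}R_n$ columns of $[\tG_n]_{(2)}$ across every combination of the "don't-care" indices $(r_1,\ldots,r_{n-2})$ and $(r_{n+1},\ldots,r_{N-1})$, which live in index sets of sizes $R_{<n-1}$ and $R_{>n}$ respectively.

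With a consistent column-ordering convention for $\bA^{(n)}$ in which the outer indices $(r_{n+1},\ldots,r_{N-1})$ vary slowest, the middle pair $(r_{n-1},r_n)$ next, and $(r_1,\ldots,r_{n-2})$ fastest, the required replication is realized precisely by the matrix $\bQ_n=\1_{R_{>n}}^T\otimes \bI_{R_{n-1}R_n}\otimes \1_{R_{<n-1}}^T$: the identity block selects column $(r_{n-1},r_n)$, while the two row vectors of ones copy that selected column across the remaining Kronecker factors. Multiplying on the left by $[\tG_n]_{(2)}$ then gives \eqref{eq_An_tt_to_kr}, and checking the resulting column of $\bA^{(n)}$ against the corresponding summand above confirms equality.

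The main obstacle will be purely notational: verifying that the tensor product of unit vectors that indexes a generic column of $\bQ_n$ is placed into the correct slot of the Kronecker chain, and handling the boundary cases $n=1$ and $n=N$, where $\1_{R_{<0}}^T$ and $\1_{R_{>N}}^T$ should be interpreted as the scalar $1$ so that $\bA^{(1)}=\bG_1\bQ_1$ and $\bA^{(N)}=\bG_N\bQ_N$ reduce to replications of $\bG_1$ and $\bG_N$. Once index ordering is fixed consistently across all modes, the formula is a direct consequence of the identifications made above and no further computation is needed.
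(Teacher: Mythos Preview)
Your proposal is correct and follows precisely the route the paper indicates: the paper does not give a detailed proof but simply states that the expression ``is directly derived from the definition of the TT-tensor,'' and your argument is exactly that derivation---expanding the TT sum from Definition~\ref{def_tt_tensor} into rank-1 terms indexed by $(r_1,\ldots,r_{N-1})$ and identifying the resulting replication pattern with the Kronecker structure $\1_{R_{>n}}^T\otimes \bI_{R_{n-1}R_n}\otimes \1_{R_{<n-1}}^T$. Your attention to the column-ordering convention and the boundary cases $n=1,N$ goes beyond what the paper spells out, but is consistent with it.
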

The above expression is directly derived from the definition of the TT-tensor, 
and is related to the rank-overlap or CPD with linear dependence \cite{CEM:CEM1206}, where the term $\1_{R_{>n}}^T \otimes \bI_{R_{n-1}R_{n}} \otimes  \1_{R_{<n-1}}^T$ represents the dependence matrix. The equivalence between the TT tensor and its K-tensor is illustrated in Fig.~\ref{fig_lem2}.

\begin{remark}The Kruskal representation of a TT-tensor comprises of $R_1 R_2 \cdots R_N$ rank-1 tensors, which usually exceeds the true rank of the tensor $\tY$.
\end{remark}

\subsection{Towards the exact model: Fast conversion from a TT-tensor to a K-tensor}\label{sub:TT2CP_exactrank}

When tensor is of exact rank-$R$, which is smaller than the tensor dimensions, $I_1, I_2, \ldots, I_N$, then the CPD of such tensor can boil down to Direct Three Linear Decomposition (DTLD) or the Extended DTLD, for which the solution can be found through generalised eigenvalue decomposition \cite{Sanchez1990}. However, this procedure is applicable only when the tensor rank does not exceed tensor dimensions.
In this section, we present a novel method to mitigate this issue and thereby find CPD for higher-order noise-free tensors. More specifically, we propose a direct method to deduce factor matrices of CPD from the core tensors of a TT representation of a rank-$R$ tensor $\tY$.

Note that a TT model does not provide a unique representation,
since e.g., a post-multiplication of the core tensor, $\tG_n$, with any invertible matrix, $\bQ$ of size $R\times R$, and pre-multiplication of the core tensor, $\tG_{n+1}$, with $\bQ^{-1}$ will change the core tensors but preserve the TT-representation of the tensor. In other words,
%(\ref{equ_TT_boxcross})
\begin{align}
&	\tG_1 \ttprod  \cdots \ttprod \tG_n\ttprod \tG_{n+1}  \ttprod   \cdots \ttprod \tG_N
\notag \\
& \quad =	
	\tG_1 \ttprod  \cdots \ttprod \tG_n\ttprod \bQ \ttprod \bQ^{-1} \ttprod \tG_{n+1}  \ttprod   \cdots \ttprod \tG_N  \notag \\
	&\quad  = \tG_1 \ttprod  \cdots \ttprod {\tilde{\tG}}_n\ttprod {\tilde{\tG}}_{n+1}  \ttprod   \cdots \ttprod \tG_N \,  \notag,
\end{align}
where ${\tilde{\tG}}_n =  {\tG}_n  \ttprod \bQ$ and  ${\tilde{\tG}}_{n+1} =  \bQ^{-1} \ttprod \tG_{n+1}$.

Due to this ambiguity, even when the tensor $\tY$ is of exact rank-$R$, fitting a TT model to $\tY$, in general, does not yield a TT-tensor whose core tensors, $\tG_n$, have diagonal structures, as stated in Lemma~\ref{lem_Kt_to_TT}.
In other words, we cannot take the diagonals of $\tG_n(:,i,:)$ as rows of $\bA^{(n)}$.

We next show that $\bA^{(n)}$ are factor matrices of CPDs of $\tG_n$ for $n=2,\ldots,N-1$.

\begin{lemma}\label{lemma_TT_CP_exactmodel}
Assume that a rank-$R$ tensor $\tY$ has a unique CPD given by $ \llbracket \bA^{(1)}, \bA^{(2)}, \ldots,  \bA^{(N)} \rrbracket$, and
a TT representation of rank-$(R,\ldots,R)$, that is
\be
	\tY = \llbracket \bA^{(1)}, \bA^{(2)}, \ldots,  \bA^{(N)} \rrbracket =  \bG_1 \ttprod  \tG_2 \ttprod \cdots\ttprod \tG_{N-1}  \ttprod  \bG_N   . \label{eq_TT_CPD}
\ee
Then $\tG_n$ can be equivalently expressed by K-tensors of $R$ components, the second factor matrices of which are $\bA^{(n)}$, up to scaling and column permutation for $n = 2, \ldots, N-1$.
For example, we have
\be
\tG_n = \llbracket \bQ_{n} , \bA^{(n)}, \bS_n \rrbracket  \notag,
\ee
where $\bQ_{n}$ and $\bS_{n}$ are matrices of size $R \times R$  which hold
\be
\bA^{(1)}  &=& \bG_1 \, \bQ_2  \diag(\bgamma_1) \, \label{eq_lem3_a1},\\
\bA^{(N)} &=& \bG_N^T\, \bS_{N-1}  \diag(\bgamma_{N}) \, , \label{eq_lem3_aN}\\
\bS_n \, \bQ_{n+1} &=& \diag(\bgamma_n), \quad n = 2, \ldots, N-2, \label{eq_lem3_an}
\ee
and $\bgamma_1 \* \bgamma_2 \* \cdots \*\bgamma_{N-2} \*  \bgamma_N = \1_R$.
\end{lemma}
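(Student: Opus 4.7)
My plan is to combine Lemma~\ref{lem_Kt_to_TT} with the gauge freedom of TT representations at minimal rank. By Lemma~\ref{lem_Kt_to_TT}, the CPD $\tY = \llbracket \bA^{(1)},\ldots,\bA^{(N)}\rrbracket$ admits a second TT representation $\tY = \tilde{\tG}_1 \ttprod \cdots \ttprod \tilde{\tG}_N$ of rank $(R,\ldots,R)$, with $\tilde{\tG}_1 = \bA^{(1)}$, $\tilde{\tG}_N = (\bA^{(N)})^T$ and each interior slice $\tilde{\tG}_n(:,i,:) = \diag(\bA^{(n)}(i,:))$. Because this diagonal-slice structure is precisely the K-tensor $\tilde{\tG}_n = \llbracket \bI_R, \bA^{(n)}, \bI_R \rrbracket$ for $n = 2,\ldots,N-1$, the lemma's conclusion already holds trivially for these special cores, with $\bQ_n = \bS_n = \bI_R$ and every $\bgamma_n = \1_R$.

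Next I would pass from $\tilde{\tG}_n$ to the given $\tG_n$ by invoking the gauge freedom of TT decompositions. Since both TT representations describe the same tensor $\tY$ and both attain the minimal TT-rank $(R,\ldots,R)$, standard TT uniqueness supplies invertible matrices $\bM_1,\ldots,\bM_{N-1} \in \Real^{R\times R}$ with $\bG_1 = \bA^{(1)} \bM_1$, $\tG_n = \bM_{n-1}^{-1} \ttprod \tilde{\tG}_n \ttprod \bM_n$ for $n = 2,\ldots,N-1$, and $\bG_N^T = \bA^{(N)} \bM_{N-1}^{-T}$. A short entrywise computation --- using that pre- and post-multiplication of a K-tensor by a matrix act on its first and third factor, respectively --- gives $\tG_n = \llbracket \bM_{n-1}^{-1}, \bA^{(n)}, \bM_n^T \rrbracket$. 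Setting $\bQ_n := \bM_{n-1}^{-1}$ and $\bS_n := \bM_n^T$ then delivers exactly the claimed form $\tG_n = \llbracket \bQ_n, \bA^{(n)}, \bS_n \rrbracket$ for every interior core, with the second factor equal to $\bA^{(n)}$ up to the unavoidable column-permutation and rescaling slack of CPD.

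To land the identities (\ref{eq_lem3_a1})--(\ref{eq_lem3_an}), I would plug these K-tensor expressions back into the full chain $\tY = \bG_1 \ttprod \tG_2 \ttprod \cdots \ttprod \tG_{N-1} \ttprod \bG_N$ and match it rank-one component by rank-one component against $\llbracket \bA^{(1)},\ldots,\bA^{(N)}\rrbracket$. An index-level contraction reveals that every internal bond sum collapses to a single index $s$ precisely when the cross-bond matrix between two consecutive interior cores is diagonal, which is exactly (\ref{eq_lem3_an}) with its diagonal labelled $\bgamma_n$. The two surviving boundary contributions then match (\ref{eq_lem3_a1}) and (\ref{eq_lem3_aN}), and the requirement that the reconstructed $s$-th rank-one term equal the $s$-th CPD component with unit total multiplicative weight yields the normalization $\bgamma_1 \* \bgamma_2 \* \cdots \* \bgamma_{N-2} \* \bgamma_N = \1_R$.

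The main obstacle I foresee is justifying the gauge uniqueness invoked in the second step. Two TT representations of the same tensor with equal TT-ranks are related by invertible $\bM_n$'s only when the left and right unfoldings of the cores have full column rank $R$. This is where the hypotheses are crucial: essential uniqueness of the CPD forces the columns of every $\bA^{(n)}$ to be linearly independent, so the CPD-induced cores $\tilde{\tG}_n$ of Lemma~\ref{lem_Kt_to_TT} do have rank-$R$ unfoldings on both sides, and the TT rank of $\tY$ really is $(R,\ldots,R)$. Once this full-rank condition is in place, the gauge identification is routine and the rest of the proof follows mechanically.
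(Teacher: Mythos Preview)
Your approach is correct in spirit but genuinely different from the paper's. The paper never invokes TT gauge freedom; instead it reshapes $\tY$ into an order-3 tensor $(I_1\cdots I_{n-1})\times I_n\times(I_{n+1}\cdots I_N)$ to obtain $\llbracket \bA_{<n},\bA^{(n)},\bA_{>n}\rrbracket = \bG_{<n}\ttprod\tG_n\ttprod\bG_{>n}$, reads off that $\tG_n$ must be a rank-$R$ K-tensor with second factor $\bA^{(n)}$ from this Tucker-2 identity, and then derives the diagonal relations $\bS_n^T\bQ_{n+1}=\diag(\bgamma_n)$ one bond at a time by repeatedly reshaping and appealing to CPD uniqueness. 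Your route through Lemma~\ref{lem_Kt_to_TT} plus the gauge relation $\tG_n=\bM_{n-1}^{-1}\ttprod\tilde{\tG}_n\ttprod\bM_n$ is cleaner and more structural: it delivers $\bQ_n=\bM_{n-1}^{-1}$, $\bS_n=\bM_n^T$, and hence $\bS_n^T\bQ_{n+1}=\bI_R$ in one stroke, so all the $\bgamma_n$'s come out as $\1_R$ by construction. The paper's argument, by contrast, is self-contained within CPD machinery and does not rely on an external TT-uniqueness theorem.

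There is, however, a genuine flaw in your last paragraph. You write that ``essential uniqueness of the CPD forces the columns of every $\bA^{(n)}$ to be linearly independent.'' This is false in exactly the regime the paper cares about: the whole point of the method is to handle $R>I_n$, where $\bA^{(n)}$ has more columns than rows and cannot have independent columns (cf.\ Example~\ref{ex_n5and10_I5_R5_10} with $I_n=5$, $R=10$). What you actually need for the gauge argument is that the \emph{Khatri--Rao products} $\bA_{\le n}$ and $\bA_{>n}$ have full column rank $R$, since these are the left/right unfoldings of the CPD-induced TT cores. That follows not from CPD uniqueness but from the hypothesis that the TT-rank of $\tY$ is $(R,\ldots,R)$: the $n$-th unfolding $\bY_{\langle n\rangle}=\bA_{\le n}\bA_{>n}^T$ has rank $R$ precisely when both Khatri--Rao factors do. Fix this one sentence and your proof goes through.
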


\begin{figure}[t]
\centering
\psfrag{X1}[lb][lb]{\scalebox{1}{\color[rgb]{0,0,0}\setlength{\tabcolsep}{0pt}\hspace{0cm}\begin{tabular}{c}$\tG_1$\end{tabular}}}%
\psfrag{X2}[lb][lb]{\scalebox{1}{\color[rgb]{0,0,0}\setlength{\tabcolsep}{0pt}\hspace{0cm}\begin{tabular}{c}$\tG_2$\end{tabular}}}%
\psfrag{X3}[cb][cb]{\scalebox{1}{\color[rgb]{0,0,0}\setlength{\tabcolsep}{0pt}\hspace{0cm}\begin{tabular}{c}$\tG_3$\end{tabular}}}%
\psfrag{Xn2}[cb][cb]{\scalebox{1}{\color[rgb]{0,0,0}\setlength{\tabcolsep}{0pt}\hspace{0cm}\begin{tabular}{c}$\tG_{N-2}$\end{tabular}}}%
\psfrag{Xn1}[cb][cb]{\scalebox{1}{\color[rgb]{0,0,0}\setlength{\tabcolsep}{0pt}\hspace{0cm}\begin{tabular}{c}$\tG_{N-1}$\end{tabular}}}%
\psfrag{XN}[cb][cb]{\scalebox{1}{\color[rgb]{0,0,0}\setlength{\tabcolsep}{0pt}\hspace{0cm}\begin{tabular}{c}$\tG_{N}$\end{tabular}}}
\psfrag{I1}[cb][cb]{\scalebox{1}{\color[rgb]{0,0,0}\setlength{\tabcolsep}{0pt}\hspace{0cm}\small\begin{tabular}{c}$I_1$\end{tabular}}}%
\psfrag{I2}[cb][cb]{\scalebox{1}{\color[rgb]{0,0,0}\setlength{\tabcolsep}{0pt}\hspace{0cm}\small\begin{tabular}{c}$I_2$\end{tabular}}}%
\psfrag{I3}[cb][cb]{\scalebox{1}{\color[rgb]{0,0,0}\setlength{\tabcolsep}{0pt}\hspace{0cm}\small\begin{tabular}{c}$I_3$\end{tabular}}}%
\psfrag{IN}[cb][cb]{\scalebox{1}{\color[rgb]{0,0,0}\setlength{\tabcolsep}{0pt}\hspace{0cm}\small\begin{tabular}{c}$I_N$\end{tabular}}}%
\psfrag{In1}[cb][cb]{\scalebox{1}{\color[rgb]{0,0,0}\setlength{\tabcolsep}{0pt}\hspace{0cm}\small\begin{tabular}{c}$I_{N-1}$\end{tabular}}}%
\psfrag{In2}[cb][cb]{\scalebox{1}{\color[rgb]{0,0,0}\setlength{\tabcolsep}{0pt}\hspace{0cm}\small\begin{tabular}{c}$I_{N-2}$\end{tabular}}}%
\psfrag{R1}[cc][cc]{\scalebox{1}{\color[rgb]{0,0,0}\setlength{\tabcolsep}{0pt}\hspace{0cm}\footnotesize\begin{tabular}{c}$R_{1}$\end{tabular}}}%
\psfrag{R2}[cc][cc]{\scalebox{1}{\color[rgb]{0,0,0}\setlength{\tabcolsep}{0pt}\hspace{0cm}\footnotesize\begin{tabular}{c}$R_{2}$\end{tabular}}}%
\psfrag{R3}[cc][cc]{\scalebox{1}{\color[rgb]{0,0,0}\setlength{\tabcolsep}{0pt}\hspace{0cm}\footnotesize\begin{tabular}{c}$R_{3}$\end{tabular}}}%
\psfrag{Rn2}[cc][cc]{\scalebox{1}{\color[rgb]{0,0,0}\setlength{\tabcolsep}{0pt}\hspace{0cm}\footnotesize\begin{tabular}{c}$R_{N-2}$\end{tabular}}}%
\psfrag{Rn1}[cc][cc]{\scalebox{1}{\color[rgb]{0,0,0}\setlength{\tabcolsep}{0pt}\hspace{0cm}\footnotesize\begin{tabular}{c}$R_{N-1}$\end{tabular}}}%
%
%\subfigure[Representation of a tensor in the TT-format.]{\includegraphics[width=.4\textwidth]{tt_6cores}\label{tt_6cores}}
%\\
%
\psfrag{A2}[rb][rb]{\scalebox{1}{\color[rgb]{0,0,0}\setlength{\tabcolsep}{0pt}\hspace{0cm}\begin{tabular}{c}$\bA^{(2)}$\end{tabular}}}%
\psfrag{A3}[rb][rb]{\scalebox{1}{\color[rgb]{0,0,0}\setlength{\tabcolsep}{0pt}\hspace{0cm}\begin{tabular}{c}$\bA^{(3)}$\end{tabular}}}%
\psfrag{An2}[rb][rb]{\scalebox{1}{\color[rgb]{0,0,0}\setlength{\tabcolsep}{0pt}\hspace{0cm}\begin{tabular}{c}$\bA^{(N-2)}$\end{tabular}}}%
\psfrag{An1}[rb][rb]{\scalebox{1}{\color[rgb]{0,0,0}\setlength{\tabcolsep}{0pt}\hspace{0cm}\begin{tabular}{c}$\bA^{(N-1)}$\end{tabular}}}%
%\subfigure[CPD of the core tensors $\tG_n$ to convert a TT to K-tensor.]
%
%{\includegraphics[width=.85\linewidth]{tttocpd}\label{tttocpd}}
%{\includegraphics[width=1\linewidth,trim = 1.0cm 10.5cm 5cm 0cm,clip=true]{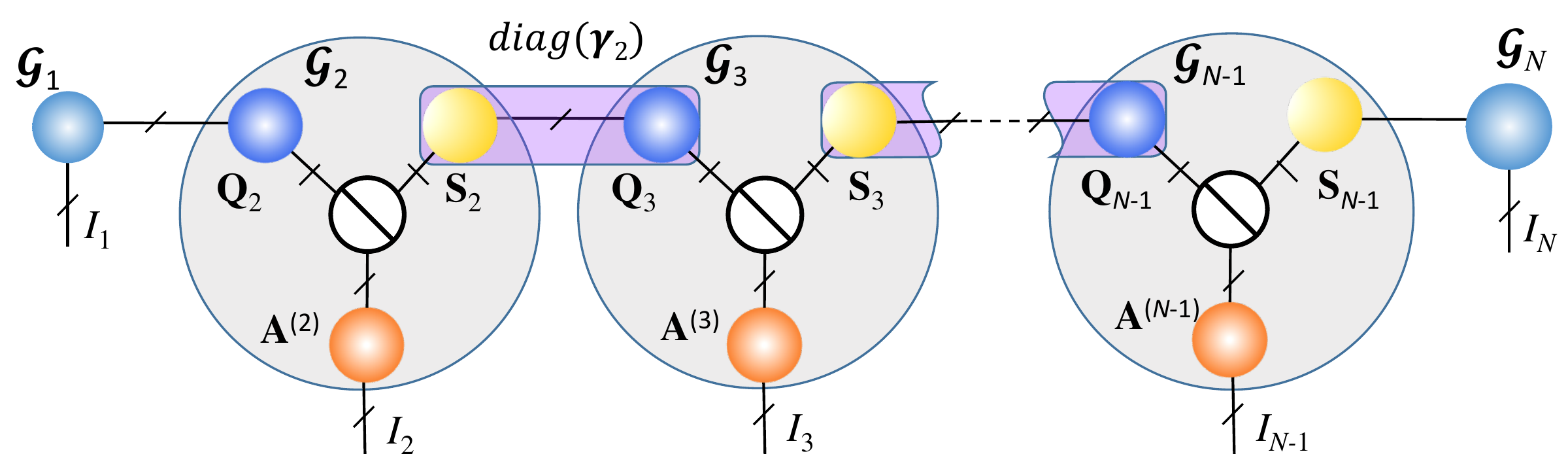}\label{tttocpd}}
{\includegraphics[width=1\linewidth,trim = 0.5cm 11.5cm 9cm 0.5cm,clip=true]{lemma_3b.pdf}\label{tttocpd}}
%{\includegraphics[width=1\linewidth]{lemma_1.pdf}\label{lem1}}
\caption{Conversion of a tensor in the TT-format to a K-tensor through CPDs of the 3rd-order core tensors $\tG_2$, \ldots, $\tG_{N-1}$. 
Big nodes designate the core tensors $\tG_n$ and their CPDs, $\tG_n = \llbracket  \bQ_n, \bA^{(n)}, \bS_n \rrbracket$. 
The factor matrix $\bA^{(n)}$ can be retrieved from the 2nd factor matrix. See also Lemma~\ref{lemma_TT_CP_exactmodel}.}
\label{fig_TT6b}
\end{figure}
\begin{proof}
For $n = 2, \ldots, N-1$, we define the following matrices from the unfolding of the train contractions of $\tG_n$
\be
\bG_{<n} &=& \left(\tG_1 \ttprod \tG_2 \ttprod \cdots \ttprod \tG_{n-1} \right)_{(n)}^T  \label{equ_Un},\\
\bG_{>n} &=& \left(\tG_{n+1} \ttprod \tG_{n+2} \ttprod \cdots \ttprod \tG_{N} \right)_{(1)}  \label{equ_Wn},
\ee
and matrices of the Khatri-Rao products
\be
\bA_{<n} &=& \bA^{(n-1)}  \odot \cdots \odot  \bA^{(1)}   \label{equ_Bn},\\
\bA_{>n} &=& \bA^{(N)}  \odot \cdots \odot  \bA^{(n+1)}   \label{equ_Cn}.
\ee
Next we reshape the tensor $\tY$ in (\ref{eq_TT_CPD}) to order-3 tensors $(I_1 \cdots I_{n-1}) \times I_n \times (I_{n+1} \cdots I_N)$ to yield
\be
 \llbracket  \bA_{<n}, \bA^{(n)}, \bA_{>n} \rrbracket = \bG_{<n} \ttprod \tG_n  \ttprod \bG_{>n} % = \tG_n \times_1 \bG_{<n} \times_3 \bG_{>n}^T   
 \notag.
\ee
On the right-hand side we have a Tucker-2 decomposition, whereby the core tensor $\tG_n$ is multiplied by the matrices $\bG_{<n}$  and $\bG_{>n}^T$ along its respective modes-1 and 3, to yield a rank-$R$ tensor whose second factor matrix is $\bA^{(n)}$. This implies that  $\tG_n$, $n = 2, \ldots, N-1$, can also be expressed by a K-tensor of $R$ components, whose mode-2 factor matrix is $\bA^{(n)}$
\be
\tG_n = \llbracket \bQ_n, \bA^{(n)}, \bS_{n} \rrbracket \,. \notag
\ee
We next show the relation for the factor matrix $\bA^{(1)}$, then derive those for the other factor matrices.
Consider the tensor reshaping of $\tY$ to an order-3 tensor of size $I_1 \times I_2 \times (I_3 \cdots I_N)$ which gives 
\begin{align}
\llbracket \bA^{(1)}, \bA^{(2)}, \bA_{>2} \rrbracket &= \bG_1 \ttprod  \tG_2 \ttprod  \bG_{>2}  \notag \\
&= \bG_1 \ttprod  \llbracket \bQ_2, \bA^{(2)}, \bS_{2} \rrbracket \ttprod \bG_{>2}  \notag
 \, \\\notag
&=  \llbracket \bG_1 \bQ_2, \bA^{(2)}, \bG_{>2} ^T \bS_{2} \rrbracket  \notag
 \, .\notag
\end{align}
The uniqueness of the CP representation of $\tY$ means that $\bA^{(1)}$ and $\bG_1 \bQ_2$ are identical up to scaling of a factor $\bgamma_1$
\be
\bA^{(1)} = \bG_1 \bQ_2 \diag(\bgamma_1)\,.\label{eq_a1_gamma1}
\ee
For the reshaping of $\tY$ which yields a tensor of size $I_1 I_2 \times I_3 \times (I_4 \cdots I_N)$, we have 
\begin{align}
\llbracket \bA^{(2)} \odot \bA^{(1)}, \bA^{(3)}, \bA_{>3} \rrbracket &= \bG_{<3} \ttprod \tG_3 \ttprod  \bG_{>3}  \notag \\
&=  \bG_{<3} \ttprod   \llbracket \bQ_3, \bA^{(3)}, \bS_{3} \rrbracket \ttprod \bG_{>3}  \notag
 \, \\
 &=   \llbracket \bG_{<3} \bQ_3, \bA^{(3)}, \bG_{>3} ^T \bS_{3} \rrbracket \label{eq_unfold_213}.
\end{align}
Since $\tG_1 \ttprod \tG_2 = \llbracket \bG_1 \bQ_2, \bA^{(2)},   \bS_{2} \rrbracket$, its mode-3 unfolding  is given by
\be
\bG_{<3} = (\bG_1 \ttprod \tG_{2})_{(3)}^T =  (\bA^{(2)} \odot \bG_1 \bQ_2) \, \bS_{2}^T. \notag 
\ee
Upon inserting into the CP representation in (\ref{eq_unfold_213}), we obtain 
\begin{align}
\llbracket \bA^{(2)} \odot \bA^{(1)}, \bA^{(3)}, \bA_{>3} \rrbracket &=  \llbracket (\bA^{(2)} \odot \bG_1 \bQ_2) \, \bS_{2}^T \bQ_3, \bA^{(3)}, \bG_{>3} ^T \bS_{3} \rrbracket \notag .
\end{align}
Again, due to the uniqueness of the CPD on the left-hand side, and from (\ref{eq_a1_gamma1}), the factor matrices in both CPDs are identical up to the scalling of a factor $\bbeta$, that is
\be
\bA^{(2)} \odot \bA^{(1)} &=& (\bA^{(2)} \odot \bG_1 \bQ_2) \, \bS_{2}^T \bQ_3 \diag(\bbeta)\notag \\
&=& (\bA^{(2)} \odot \bA^{(1)} \diag(\bgamma_1)^{-1}) \, \bS_{2}^T \bQ_3 \diag(\bbeta)\notag \\
&=& (\bA^{(2)} \odot \bA^{(1)}) \,  \diag(\bgamma_1)^{-1} \bS_{2}^T \bQ_3 \diag(\bbeta)\notag.
\ee
The last expression implies that $\bS_{2}^T \bQ_3$ must be a diagonal matrix, $
\bS_{2}^T \bQ_3 = \diag(\bgamma_2)$.
Similarly, we can prove that $\bS_{n}^T \bQ_{n+1} = \diag(\bgamma_n)$, for $n = 3, \ldots, N-2$, are diagonal matrices.
By replacing the K-tensor representation of $\tG_n$ into (\ref{eq_TT_CPD}), we arrive at an alternative Kruskal representation of $\tY$, given by
\begin{align}
\tY &= \bG_1 \ttprod  \tG_2 \ttprod  \tG_3  \ttprod \cdots \ttprod  \tG_{N-1} \ttprod  \bG_N
\notag \\
&= \bG_1 \ttprod  \llbracket \bQ_2, \bA^{(2)}, \bS_{2} \rrbracket \ttprod  \llbracket \bQ_3, \bA^{(3)}, \bS_{3} \rrbracket  \ttprod
\cdots \ttprod \notag \\&\quad\quad\quad\quad \quad\quad\quad\quad\quad\quad\quad \ttprod \llbracket \bQ_{N-1}, \bA^{(N-1)}, \bS_{N-1} \rrbracket
 \ttprod  \bG_N \notag\\
%& \quad\quad\quad\quad \quad\quad\quad\quad\quad\quad\quad  \ttprod \llbracket \bQ_{N-1}, \bA^{(N-1)},   \bG_N^T \bS_{N-1} \rrbracket  \notag\\
&= \llbracket \bG_1 \bQ_2, \bA^{(2)} \,  \bS_{2}^T \bQ_3 ,  \bA^{(3)} \, \bS_{3}^T \bQ_{4},  \ldots,\notag \\
& \quad\quad\quad\quad \quad\quad\quad\quad\quad\quad\quad   \bA^{(N-2)} \bS_{N-2}^T \bQ_{N-1}, \bA^{(N-1)} ,  \bG_N^T \bS_{N-1}  \rrbracket \, .\notag
\end{align}
Since the CPD of $\tY$ is assumed to be unique, the above K-tensor of $\tY$ must be identical to $\llbracket \bA^{(1)}, \bA^{(2)}, \ldots,  \bA^{(N)} \rrbracket$ up to the scaling of the factor matrices. In other words, we obtain the representation given in (\ref{eq_lem3_a1})-(\ref{eq_lem3_an}). This completes the proof.
\end{proof}

\begin{remark}
As a consequence of Lemma~\ref{lemma_TT_CP_exactmodel}, we can immediately deduce the factor matrices, $\bA^{(n)}$, from only $(N-2)$ low-scale CPDs of the 3rd-order core tensors, $\tG_2$, \ldots, $\tG_{N-1}$, that is
\be
\tG_{n} = \llbracket \blambda_n; \bQ_n , \bA^{(n)}, \bS_n \rrbracket  \notag ,
\ee
where $\blambda_n$ are positive scaling vectors of length $R$, while the columns of the factor matrices are of unit length.
Since $\tG_n$ are order-3 tensors, their CPDs can be found in closed-form through DTLD.
\end{remark}

Another important issue is the permutation ambiguity, which refers to a possible arbitrary ordering of the columns of the factor matrices $\bA^{(n)}$ in CPDs of $\tG_n$, which may not match the ordering of columns of the other factor matrices.
This requires us to reorder the columns of $\bA^{(1)}$, $\bA^{(2)}$, \ldots, $\bA^{(N)}$ using appropriate permutations.
Fortunately, these permutations can be determined through the products $\bS_n^T \bQ_{n+1}$ for $n = 2, \ldots, N-2$. 
In practice, due to the scaling ambiguity of the decompositions, these products are not always diagonal matrices as stated in Lemma~\ref{lemma_TT_CP_exactmodel}, but exhibit some form of permutation, that is
\be
\bS_n^T \bQ_{n+1} =  \diag(\bgamma_n) \, \bP_n. \notag
\ee
By identifying dominant entries in the rows of $\bS_n^T \bQ_{n+1}$ and their locations, we can determine $\bgamma_n$ and the permutation matrices $\bP_n$.
As a result, the loading components of the K-tensor $\tG_{n+1} = \llbracket \blambda_{n+1}; \bQ_{n+1} , \bA^{(n+1)}, \bS_{n+1} \rrbracket$ can be permuted and normalised to give
\be
\tG_{n+1} =
 \llbracket \blambda_{n+1} \* \bP_n \, \bgamma_n; \bQ_{n+1} \bP_n^T \diag(\1_R \oslash \bgamma_n) , \bA^{(n+1)} \, \bP_n, \bS_{n+1} \, \bP_n \rrbracket  \notag
\ee
so that
\be
\bS_n^T \, {\tilde{\bQ}_{n+1}} = \bS_n^T \, \bQ_{n+1} \bP_n^T  \diag(\1_R \oslash \bgamma_n)  =  \bI_R \,. \notag
\ee
This correction can be achieved sequentially for $\bA^{(2)}$, $\bA^{(3)}$, \ldots, $\bA^{(N-1)}$, while the first and the last factor matrices, $\bA^{(1)}$ and $\bA^{(N)}$, are simply obtained as matrix products
\be
\bA^{(1)} =  \bG_1 \bQ_2  \, ,  \quad
\bA^{(N)}  = \bG_{N}^T \bS_{N-1}  \, . \notag
\ee
Finally, the scaling vectors $\blambda_2$, $\blambda_2$, \ldots, $\blambda_{N-1}$ can be absorbed into one of the  factor matrices, e.g., $\bA^{(N)}$.

After permuting and re-scaling $\bA^{(n)}$, we finally obtain a rank-$R$ CPD of the high order tensor $\tY$ through $(N-2)$ CPDs of order-3 tensors.
The entire procedure to construct a K-tensor of rank-$R$ from a TT-tensor for the exact model is summarized in Algorithm~\ref{alg_TT_to_CPD_exact}, with the corresponding tensor graph given in Fig.~\ref{fig_TT6b}.

{%\def\baselinestretch{1}
 \setlength{\algomargin}{1em}
\begin{algorithm}[t!]
%\SetAlgoInsideSkip
\SetFillComment
\SetSideCommentRight
\CommentSty{\footnotesize}
%\scriptsize
\caption{{TT to K-tensor conversion for the exact model}\label{alg_TT_to_CPD_exact}}
\DontPrintSemicolon \SetFillComment \SetSideCommentRight
\KwIn{TT-tensor $\tG_1 \ttprod \tG_2 \ttprod \cdots \ttprod \tG_N$:  $(I_1 \times I_2 \times \cdots \times I_N)$ of rank-$(R,\ldots,R)$}
\KwOut{A K-tensor $\llbracket  \bA^{(1)}, \bA^{(2)}, \ldots, \bA^{(N)}  \rrbracket$ of rank $R$}
\Begin{
{\mtcc{{\color{blue}{Rank-$R$ CPD of $\tG_n$ to find $\bA^{(2)}$, \ldots, $\bA^{(N-1)}$}}\dashrule}}
\For {$n = 2, \ldots, N-1$}{
\nl $\tG_n \approx  \llbracket    \blambda_n ; \bQ_n, \bA^{(n)}, \bS_{n}  \rrbracket$\;
}

\For {$n = 2, \ldots, N-2$}{
{\mtcc{{\color{blue}{Seek permutation matrices $\bP_n$}}\dashrule}}
\nl $\bS_{n}^T \, \bQ_{n+1} \approx \diag(\bgamma_n) \bP_n $\;
{\mtcc{{\color{blue}{Reorder columns of $\bA^{(n+1)}$}} \dashrule}}
\nl $\bA^{(n+1)} \leftarrow \bA^{(n+1)} \bP_n$, $\bS^{(n+1)} \leftarrow \bS^{(n+1)} \bP_n$, $\blambda_n \leftarrow \blambda_n \* \bP_n \bgamma_n$
}
\nl $\blambda =  \blambda_2 \* \blambda_3  \* \cdots \* \blambda_{N-1}$\;% \tcc*{Compute scaling coefficients $\blambda$}
\nl $\bA^{(1)} = \bG_1 \, \bQ_2$, $\bA^{(N)} = \bG_N^T \, \bS_{N-1} \diag(\blambda)$%\tcc*{Compute $\bA^{(1)}$ and $\bA^{(N)}$}
}
\end{algorithm}
}

Although Algorithm~\ref{alg_TT_to_CPD_exact} is derived for the exact (noise-free) model, it can also be applied to the noisy cases, and even used as an efficient initialization method for CPD of higher-order tensors.
Example~\ref{ex_n5and10_I5_R5_10} illustrates the efficiency of Algorithm~\ref{alg_TT_to_CPD_exact} for CPD of noisy tensors.

\subsection{Sequential conversion based on best rank-1 matrix approximation}\label{sec:seqconv}

We next show that the factor matrices, $\bA^{(n)}$ in Lemma~\ref{lemma_TT_CP_exactmodel}, can be found through best rank-1 approximations to the slices of the core tensors $\tG_n$.
Different from the previous section, the rank-$R$ tensor $\tY = \llbracket \bA^{(1)}, \bA^{(2)}, \ldots,  \bA^{(N)} \rrbracket$ can be represented by a TT-tensor of $(N-2)$ core tensors of order-3 as follows
\be
\tY = \tG_1 \ttprod  \tG_2 \ttprod \cdots \ttprod \tG_{N-2} \notag ,
\ee
where $\tG_1$ is of size $I_1 \times I_2 \times R$,
$\tG_n$ is of size $R \times I_{n+1}  \times R$, $n = 2, \ldots, N-3$
and  $\tG_{N-2}$ is of size $R \times I_{N-1} \times I_{N}$.
Similar to the result in Lemma~\ref{lemma_TT_CP_exactmodel}, the core tensors $\tG_n$ have representations in the Kruskal format
\be
\tG_1 &=&  \llbracket \bA^{(1)}, \bA^{(2)}, \bS_1\rrbracket \,,\notag \\
\tG_n &=&  \llbracket \bQ_n, \bA^{(n+1)}, \bS_n\rrbracket  \,, \quad n = 2, \ldots, N-3\notag \,,\\
\tG_{N-2} &=&  \llbracket \bQ_{N-2}, \bA^{(N-1)}, \bA^{(N)} \rrbracket   \,, \notag
\ee
where $\bS_n^T \, \bQ_{n+1} = \diag(\bgamma_n)$, for $n = 1, 2, \ldots, N-3$.
Observing that a multiplication of  the core tensor $\tG_n$ with the matrix $\bS_{n-1}$ yields a K-tensor whose first loading matrix
is an identity matrix, we can write
\be
\tilde{\tG}_n   &=& \bS_{n-1}^T \ttprod \tG_n
=     \llbracket \bS_{n-1}^T \bQ_{n}, \bA^{(n+1)}, \bS_n\rrbracket \notag  \\
&=&   \llbracket \diag(\bgamma_{n-1}), \bA^{(n+1)}, \bS_n\rrbracket \notag  \\
%&=&   \llbracket \bI_R , \bA^{(3)}, \bS_2 \diag(\bgamma_1) \rrbracket \notag   \\
&=&   \llbracket \bI_R , \bA^{(n+1)}, \tilde{\bS}_n   \rrbracket \notag   ,
\ee
where $\tilde{\bS}_n    = \bS_n \diag(\bgamma_{n-1})$. 

\begin{remark}
The above particular decomposition implies that the horizontal slices, $\tilde{\tG}_n(r,:,:)$, of $\tilde{\tG}_n$ are rank-1 matrices and the columns of the factor matrix $\bA^{(n+1)}$ are leading singular vectors of these slices, that is
\be
\tilde{\tG}_n(r,:,:) = \ba^{(n+1)}_r  \, \tilde{\bs}^{(n) T}_r \, . \label{eq_bestrank1}
\ee
\end{remark}

We shall next utilise the above property to propose an alternative method to derive the factor matrices, $\bA^{(n)}$, as follows.
First, we decompose the core tensor $\tG_1$ to find the two factor matrices, $\bA^{(1)}$ and $\bA^{(2)}$, 
then modify the second core tensor $\tG_2$ by the third factor matrix of $\tG_1$, i.e., $\bS_1$, to give $\tilde{\tG}_2  = \bS_{1}^T \ttprod \tG_2$.
The third factor matrix, $\bA^{(3)}$, and the matrix ${\bS}_2$ are found through the best rank-1 matrix approximation to the horizontal slices of the tensor $\tilde{\tG}_2$ in (\ref{eq_bestrank1}).
We continue the process by modifying the next core tensors $\tG_3$, \ldots, $\tG_{N-2}$, and computing the best rank-1 matrix approximations.
The last factor matrix $\bA^{(N)}$ is the matrix ${\bS}_{N-2}$. The whole estimation procedure is outlined in Algorithm~\ref{alg_TT_to_CPD_exact_svd}.
Different from Algorithm~\ref{alg_TT_to_CPD_exact}, this algorithm runs only one CPD of the first core tensor $\tG_1$, and need not deal with the permutation of the factor matrices $\bA^{(n)}$. It is worth noting that both proposed conversion algorithms are also applicable to complex-valued tensors.

{%\def\baselinestretch{1}
 \setlength{\algomargin}{1em}
\begin{algorithm}[t!]
%\SetAlgoInsideSkip
\SetFillComment
\SetSideCommentRight
\CommentSty{\footnotesize}
%\scriptsize
\caption{{Sequential construction of a K-tensor from a TT- representation}\label{alg_TT_to_CPD_exact_svd}}
\DontPrintSemicolon \SetFillComment \SetSideCommentRight
\KwIn{TT-tensor $\tY = \tG_1 \ttprod \tG_2 \ttprod \cdots \ttprod \tG_{N-2}$:  $(I_1 \times I_2 \times \cdots \times I_N)$ of rank-$(R,\ldots,R)$}
\KwOut{A K-tensor $\tY = \llbracket  \bA^{(1)}, \bA^{(2)}, \ldots, \bA^{(N)}  \rrbracket$ of rank $R$}
\Begin{
%{\mtcc{{\color{blue}{Rank-$R$ CPD of $\tG_1$ to find $\bA^{(1)}$  and  $\bA^{(2)}$}}\dashrule}}
\nl $\tG_1 \approx  \llbracket  \bA^{(1)}, \bA^{(2)}, \bS_{1}  \rrbracket$\tcc*{find $\bA^{(1)}$  and  $\bA^{(2)}$}
\For {$n = 2, \ldots, N-2$}{
%{\mtcc{{\color{blue}{Modify the core tensor $\tG_n$}}\dashrule}}
\nl $\tilde{\tG}_n = \bS_{n-1}^T \,  \ttprod \tG_n$\tcc*{Modify $\tG_n$}
{\mtcc{{\color{blue}{Seek the best rank-1 approximations \dashrule}}}}
\nl  \lFor {$r = 1, \ldots, R$}{
  $\tilde{\tG}_n(r,:,:) = \ba^{(n+1)}_r \, \bs_{r}^{(n) T}$
}
}
\nl  $\bA^{(N)} =  \bS_{N-2}$
}
\end{algorithm}
}

\section{An Iterative Algorithm to Fit a Rank-$R$ tensor to a TT-tensor}\label{sec::TT2CP_inexact}

We now derive an iterative algorithm which fits a K-tensor to a TT-tensor. This algorithm is used after a TT-compression of the data tensor, as in Stage 3 in Algorithm~\ref{alg_TT_to_CPD}.

Given a TT-tensor $\tG =  \tG_1 \ttprod  \tG_2  \ttprod \cdots \ttprod \tG_N$ where the core tensors can be either complex-valued or real-valued tensors,
the following cost function is minimised to find its best rank-$R$ tensor $\tA =  \llbracket \bA^{(1)}, \bA^{(2)}, \ldots,  \bA^{(N)} \rrbracket$, that is
\be
\min \quad D = \frac{1}{2} \|   \tG_1 \ttprod  \tG_2 \ttprod \cdots \ttprod \tG_N  - \llbracket \bA^{(1)}, \bA^{(2)}, \ldots,  \bA^{(N)} \rrbracket  \|_F^2 \label{equ_cost_fitTT} \, .
\ee
An obvious approach would be to replace the TT-tensor $\tG = \tG_1 \ttprod  \tG_2 \ttprod \cdots \ttprod \tG_N$ in (\ref{equ_cost_fitTT}) by an equivalent Kruskal tensor for which the factor matrices are found acording to Lemma~\ref{lem_tt_to_kr}
in the form
\be
\bU^{(n)} = [\tG_n]_{(2)} \left(\1_{R_{>(n)}}^T \otimes \bI_{R_{n-1}R_{n}} \otimes  \1_{R_{<n-1}}^T  \right) \,.\label{eq_Un_tt_kt}
\ee
Then, most existing algorithms for CPD can be applied to the problem of low-rank approximation of the Kruskal tensor $\llbracket \bU^{(1)}, \bU^{(2)}, \ldots,  \bU^{(N)} \rrbracket$ in order to minimise 
\be
\min \quad D = \frac{1}{2} \|\llbracket \bU^{(1)}, \bU^{(2)}, \ldots,  \bU^{(N)} \rrbracket  - \llbracket \bA^{(1)}, \bA^{(2)}, \ldots,  \bA^{(N)} \rrbracket  \|_F^2 \notag \,.
\ee
The trick here is to exploit the gradients
\be
\frac{\partial D}{\partial \bA^{(n)}} &=& \bU^{(n)} \left(\bigodot_{k\neq n} \bU^{(k)}\right) \left(\bigodot_{k\neq n} \bA^{(k)}\right) \notag \\
&=&  \bU^{(n)} \left(\bigCircleDast_{k\neq n} \bU^{(k) T}\bA^{(k)} \right)\,\label{eq_gradient_An}.
\ee
For example, the ALS update rule for CPD is expressed as
\be
\bA^{(n)} = \bU^{(n)} \left(\bigCircleDast_{k\neq n} \bU^{(k) T}\bA^{(k)} \right)  \left(\bigCircleDast_{k\neq n} \bA^{(k) T}\bA^{(k)} \right)^{-1} \notag.
\ee
We note that the Tensor toolbox in \cite{Bader_kolda} implements this computational trick, while algorithms for CPD in this toolbox and the TENSORBOX \cite{tensorbox} support the decomposition of Kruskal tensors. However,
the existing algorithms do not exploit linear dependence of the structured Kruskal tensor $\llbracket \bU^{(1)}, \bU^{(2)}, \ldots,  \bU^{(N)} \rrbracket$, while the factor matrices $\bU^{(n)}$ in principle have a relatively high number of columns $R_1 R_2 \cdots R_N$.
Such algorithms are therefore not optimized for the decomposition of structured Kruskal tensors.
We next derive algorithms for the optimization problem in (\ref{equ_cost_fitTT}).  A fast computation method which fully exploits the linear dependence structure in (\ref{eq_Un_tt_kt}) is presented in Appendix~\ref{sec:fastCP_structured_tensor},

\subsection{The ALS algorithm}\label{sec:ALS}

For generality, we consider complex-valued tensors.
To derive an ALS algorithm which sequentially updates $\bA^{(n)}$ while fixing the other factor matrices, we express the inner product between a TT-tensor and a K-tensor as
\begin{align}
{\langle \tG, \tA \rangle}
&=    \langle \bG_{<n} \ttprod \tG_n \ttprod \bG_{>n},  \llbracket \bA_{<n}, \bA^{(n)}, \bA_{>n}  \rrbracket    \rangle  \notag \\
&=     \langle  \tG_n,  \llbracket \bG_{<n}^H \bA_{<n}, \bA^{(n)}, \bG_{>n}^{*} \bA_{>n}  \rrbracket  \rangle \notag \\
&=   \tr\left( \left[{\tG_n}  \right]_{(2)}  \left(\bG_{>n} \bA_{>n}^{*}  \odot   \bG_{<n}^T \bA_{<n}^{*} \right)    \bA^{(n) \, H}   \right)^{*}  \notag \\
&=  \tr\left( \left[{\tG_n}  \right]_{(2)}  \left(\bPsi_{>n}  \odot   \bPsi_{<n} \right)    {\bA^{(n)}}^H  \right)^{*}, \label{equ_cost_An}
\end{align}
where the symbols ``$H$'' and ``*'' denote respectively the Hermitian conjugate and complex conjugate, while $\bG_{<n}$, $\bG_{>n}$, $\bA_{<n}$ and $\bA_{>n}$ are defined in (\ref{equ_Un})-(\ref{equ_Cn}),  and
%$$ and $\bPsi_{<n} = \bG_{<n}^T \bA_{<n}^{*}$ are matrices of size $R \times R$ and
\be
\bPsi_{>n} &=&  \bG_{>n} \bA_{>n}^{*}, \\
\bPsi_{<n} &=& \bG_{<n}^T \bA_{<n}^{*}, \\
\bGamma_n &=&  \bigcircledast_{k \neq n} ({\bA^{(k)}}^H \bA^{(k)}) \,.
\ee
The cost function can now be rewritten as
\begin{align}
D &= \frac{1}{2} \left( \|\tG \|_F^2  + \|\tA\|_F^2  - 2 \Re\{{\langle \tG, \tA \rangle} \} \right) \notag \\
%%
%&= \frac{1}{2} \left( \|\tG \|_F^2  + \tr({\bA^{(n)}}  \bGamma_n^T  {\bA^{(n)}}^H)  - 2 \, \Re(\langle \bG_{<n} \ttprod \tG_n \ttprod \bG_{>n},  \llbracket \bA_{<n}, \bA^{(n)}, \bA_{>n}  \rrbracket \rangle  ) \right) \notag \\
%%
%&=  \frac{1}{2} \left( \|\tG \|_F^2  +   \tr({\bA^{(n)}}  \bGamma_n^T  {\bA^{(n)}}^H)
% - 2 \, \Re( \langle  \tG_n,  \llbracket \bG_{<n}^H \bA_{<n}, \bA^{(n)}, \bG_{>n}^{*} \bA_{>n}  \rrbracket \rangle) \right) \notag \\
%%
%&=  \frac{1}{2} \left( \|\tG \|_F^2  +  \tr({\bA^{(n)}}  \bGamma_n^T  {\bA^{(n)}}^H)
% - 2  \,\Re\left( \tr\left( \left[{\tG_n}  \right]_{(2)}  \left(\bG_{>n} \bA_{>n}^{*}  \odot   \bG_{<n}^T \bA_{<n}^{*} \right)    \bA^{(n) \, H}   \right) \right)\right) \notag \\
%%
&=  \frac{1}{2} \left( \|\tG \|_F^2  + \tr({\bA^{(n)}}  \bGamma_n^T  {\bA^{(n)}}^H)
\right. \notag \\ & \quad \quad\quad\quad  \left. - 2   \Re\left\{\tr\left( \left[{\tG_n}  \right]_{(2)}  \left(\bPsi_{>n}  \odot   \bPsi_{<n} \right)    {\bA^{(n)}}^H  \right)\right\} \right)  \label{equ_cost_An},
\end{align}
for $n = 1, 2, \ldots, N$.
Since the cost function is quadratic in $\bA^{(n)}$, its solution is given explicitly by
\be
{\bA^{(n)}} = \left[{\tG_n}  \right]_{(2)}   \,  \left(\bPsi_{>n}   \odot   \bPsi_{<n} \right)  ({\bGamma_n^{*}})^{-1} \, . \label{equ_update_An}
\ee

The above update rules also holds for $\bA^{(1)}$ and $\bA^{(N)}$, for which $\bPsi_{<1} = \bPsi_{>N} = \1_R^T$.
The factor matrices $\bA^{(n)}$ are updated one by one sequentially. Each time, we need to compute the two contraction matrices, $\bPsi_{>n}$ and $\bPsi_{<n}$, of size $R\times R$, and invert a symmetric matrix, $\bGamma_n$, of size $R\times R$.
We note that the matrix $\bPsi_{>n}$ represents contraction between the core tensors $\tG_{n+1}$, \ldots, $\tG_{N}$, i.e.,  on the right side of $\tG_n$, with the factor matrices $\bA^{(n+1)}$, \ldots, $\bA^{(N)}$, which are on the right side of $\bA^{(n)}$.
Similarly, $\bPsi_{<n}$ is contraction of the core tensors $\tG_k$ and the factor matrices $\bA^{(k)}$, where $k = 1, \ldots, n-1$, i.e, on the left side of $n$. Although the update rule in (\ref{equ_update_An}) is relatively simple, the computation of the left and right contraction matrices $\bPsi_{<n}$ and $\bPsi_{>n}$ when running $n$ from 1 to $N$ is expensive. For example, when the tensor dimensions are identical, i.e., $I_1 = \cdots = I_N = I$, the computational cost of $\bG_{<n}$ is $\calO((I^2 + I^3 + \cdots + I^{n-1})R^2)$, whereas $\bA_{<n}$ computes the Khatri-Rao product of $(n-1)$ matrices; hence, it has a computational cost of $\calO(I^{n-1} R)$. This indicates that $\bPsi_{<n}$ requires a computational cost of at least $\calO(I^{n-1} R^2)$. The right contraction matrices $\bPsi_{>n}$ also require a cost of $\calO(I^{N-n+1} R^2)$.
Bearing in mind that the computational cost of each update in the ALS algorithm for CPD is of order $\calO(I^N R)$, this means that there is not much reduction in computational cost of the update rule in (\ref{equ_update_An}) compared with the oridinary ALS update.

The next section introduces a method to significantly reduce computational cost of the update rule in (\ref{equ_update_An}).

\subsection{Progressive computation of the contraction matrices $\bPsi_{>n}$ and $\bPsi_{<n}$}\label{sec::computeRnLn}

The most computationally expensive aspect of the update rule in (\ref{equ_update_An}) is the computation of  the left and right contraction matrices $\bPsi_{<n}$ and  $\bPsi_{>n}$. Fortunately, these matrices can be efficiently computed through a successive computation with a cost of $\calO(I_nR^3)$.

 \begin{lemma}[Progressive computation $\bPsi_{>n}$ and $\bPsi_{<n}$\label{lem_progressive_RnLn}]
 The contraction matrices, $\bPsi_{>n}$ and $\bPsi_{<n}$, can be computed with a cost of $\calO(I_nR^3)$ using the following recursive formula
 \be
\bPsi_{>n}  &=&   \left[\tG_{n+1}   \right]_{(1)}    \left( \bPsi_{>(n+1)} \odot  \bA^{(n+1) *} \right)    \, \label{equ_Wn_update_},  \\
\bPsi_{<n}  &=&   \left[  \tG_{n-1} \right]_{(3)}  \left(\bA^{(n-1) *}  \odot  \bPsi_{<(n-1)} \right) \label{equ_update_Ln_}.
 \ee
% \be
%\bPsi_{>n}  &=&   \left[\tG_{n+1}   \right]_{(1)}    \left( \bR_{n+1} \odot  \bA^{(n+1)} \right)    \, \label{equ_Wn_update_},  \\
%\bL_{n}  &=&   \left[  \tG_{n-1} \right]_{(3)}  \left(\bA^{(n-1)}  \odot  \bL_{n-1} \right) \label{equ_update_Ln_}.
% \ee
 \end{lemma}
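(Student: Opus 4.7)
The plan is to exploit the ``peeling'' structure of the tensor train contraction: the right sub-chain $\tG_{n+1} \ttprod \cdots \ttprod \tG_N$ differs from $\tG_{n+2} \ttprod \cdots \ttprod \tG_N$ by only a single additional contraction over the bond index $r_{n+1}$, while $\bA_{>n}$ factorises as $\bA^{(n+1)}$ Khatri-Rao'd with $\bA_{>(n+1)}$. This parallelism between the train-contraction structure of $\bG_{>n}$ and the Khatri-Rao structure of $\bA_{>n}$ is precisely what makes a one-step recursion possible, in which $\bPsi_{>n}$ is assembled from the already-computed $\bPsi_{>(n+1)}$ together with the new core $\tG_{n+1}$ and the new factor $\bA^{(n+1)}$.

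To make this precise I would start from the entry-wise identity $\bG_{>n}(r_n, i_{n+1}, \ldots, i_N) = \sum_{r_{n+1}} \tG_{n+1}(r_n, i_{n+1}, r_{n+1}) \, \bG_{>(n+1)}(r_{n+1}, i_{n+2}, \ldots, i_N)$, which is just a restatement of Definition~\ref{def_train_contract}. Substituting into the definition $\bPsi_{>n}(r_n, r) = \sum_{i_{n+1}, \ldots, i_N} \bG_{>n}(r_n, i_{n+1}, \ldots, i_N) \, \bar a^{(n+1)}_{i_{n+1}, r} \cdots \bar a^{(N)}_{i_N, r}$ and interchanging the order of summation, the inner sum over $(i_{n+2}, \ldots, i_N)$ collapses by construction into $\bPsi_{>(n+1)}(r_{n+1}, r)$. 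What is left is a double sum over the pair $(i_{n+1}, r_{n+1})$ whose summand is $\tG_{n+1}(r_n, i_{n+1}, r_{n+1}) \bar a^{(n+1)}_{i_{n+1}, r} \bPsi_{>(n+1)}(r_{n+1}, r)$; regrouping this as a matrix product identifies $[\tG_{n+1}]_{(1)}$ acting on a matrix whose rows are indexed by $(i_{n+1}, r_{n+1})$ with entries $\bar a^{(n+1)}_{i_{n+1}, r} \bPsi_{>(n+1)}(r_{n+1}, r)$, which is exactly $\bPsi_{>(n+1)} \odot \bA^{(n+1)*}$, giving (\ref{equ_Wn_update_}). The derivation of (\ref{equ_update_Ln_}) mirrors this one on the left, the only change being that the shared bond index now sits in the last mode of $\tG_{n-1}$, so its mode-3 unfolding appears and the Khatri-Rao order is reversed.

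For the complexity estimate, once the recursion is established the count is immediate: under the bound $R_k \le R$, the matrix $[\tG_{n+1}]_{(1)}$ has size $R \times I_{n+1} R$ and the Khatri-Rao matrix has size $I_{n+1} R \times R$, so one matrix product costs $\calO(I_{n+1} R^3)$, and symmetrically for $\bPsi_{<n}$. The only real pitfall — and the step where I would be most careful — is aligning the linear orderings of the unfolding columns with the row ordering of the Khatri-Rao product; the form stated in (\ref{equ_Wn_update_}) and (\ref{equ_update_Ln_}) is consistent once the reverse-order convention for $\bA_{<n}$ and $\bA_{>n}$ declared in (\ref{equ_Bn})--(\ref{equ_Cn}) is respected, and the boundary cases are handled by initialising $\bPsi_{<1} = \bPsi_{>N} = \1_R^T$, which is compatible with the recursion.
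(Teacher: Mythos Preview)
Your proposal is correct and follows essentially the same approach as the paper: both exploit the one-step ``peeling'' of the train contraction together with the Khatri-Rao factorisation $\bA_{>n} = \bA_{>(n+1)} \odot \bA^{(n+1)}$ to obtain the recursion, and the complexity count is identical. The only cosmetic difference is that the paper carries out the derivation with block-matrix algebra---writing $\bG_{>n} = [\tG_{n+1}]_{(1)}(\bG_{>(n+1)} \otimes \bI_{I_{n+1}})$ and then invoking the mixed-product rule $(\bB \otimes \bI)(\bC \odot \bD) = \bB\bC \odot \bD$---whereas you work entry-wise; the two arguments are interchangeable and your caution about the unfolding/Khatri-Rao ordering is exactly what that identity encodes.
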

 \begin{proof}
From the definition of $\bG_{>n}$  in (\ref{equ_Wn}), we can rewrite this matrix as
 \be
 \bG_{>n} &=&  \left[\tG_{n+1} \ttprod \tG_{n+2}  \ttprod \cdots  \ttprod \tG_{N} \right]_{(1)} \notag \\
 &=&  \left[\tG_{n+1}   \ttprod \bG_{>(n+1)}   \right]_{(1)} \notag\\
 &=&  \left[\tG_{n+1}   \right]_{(1)}   \left(\bG_{>(n+1)} \otimes \bI_{I_{n+1}} \right) \,. \notag
 \ee
 Similarly, from (\ref{equ_Cn}),  $\bA_{>n}$ can be rewritten as a Khatri-Rao product of $\bA_{>(n+1)}$ and $\bA^{(n+1)}$ in the form
 \be
 \bA_{>n} =  \bA_{>(n+1)}  \odot \bA^{(n+1)} \,.\notag
 \ee
By replacing the above expressions for $\bG_{>n}$ and $\bA_{>n}$  into $\bPsi_{>n} = \bG_{>n} \bA_{>n}^{*}$, we obtain a recursive formula
to efficiently compute $\bPsi_{>n}$ as
\be
\bPsi_{>n} &=& \bG_{>n} \bA_{>n}^{*} =  \left[\tG_{n+1}   \right]_{(1)}   \left(\bG_{>(n+1)} \otimes \bI_{I_{n+1}} \right) \left(\bA_{>(n+1)} ^{*} \odot \bA^{(n+1) * } \right) \notag \\
&=&  \left[\tG_{n+1}   \right]_{(1)}    \left( \bG_{>(n+1)}  \bA_{>(n+1)}^{*} \odot  \bA^{(n+1) * } \right) \notag \\
&=&  \left[\tG_{n+1}   \right]_{(1)}    \left( \bPsi_{>(n+1)} \odot  \bA^{(n+1) *} \right)    \, . \notag % \label{equ_Wn_update}
\ee
Similarly, $\bPsi_{<n}$ can be expressed as
%\be
%\bG_{<n}^T &=& \left[ \tG_1 \ttprod \tG_2 \ttprod \cdots \ttprod \tG_{n-1} \right]_{(3)} \notag \\
%&=& \left[ \bU_{n-1} \ttprod \tG_{n-1} \right]_{(3)} \notag\\
%&=& \left[  \tG_{n-1} \right]_{(3)}  \left(\bI_{I_{n-1}} \otimes \bU_{n-1} \right)^T
%\ee
%
%\be
%\bA_{<n} = \bA^{(n-1)}  \odot \bB_{n-1}
%\ee
\be
\bPsi_{<n} &=& \bG_{<n}^T \bA_{<n}^{*}  = \left[ \bG_{<(n-1)} \ttprod \tG_{n-1} \right]_{(3)} \, \left(\bA^{(n-1) * }  \odot \bA_{<(n-1)}^{*}\right)   \notag \\
&=& \left[  \tG_{n-1} \right]_{(3)}  \left(\bI_{I_{n-1}} \otimes \bG_{<(n-1)} \right)^T  \left(\bA^{(n-1) *}  \odot \bA_{<(n-1)}^{*} \right)  \notag \\
%&=&  \left[  \tG_{n-1} \right]_{(3)}  \left(\bA^{(n-1)}  \odot  \bU_{n-1}^T \bB_{n-1}  \right) \notag\\
&=&  \left[  \tG_{n-1} \right]_{(3)}  \left(\bA^{(n-1) *}  \odot  \bPsi_{<(n-1)} \right) \,.\notag %\label{equ_update_Ln}\,.
\ee
It is now straightforward to see that the computation of $\bPsi_{>n}$ from $\bPsi_{>(n+1)}$, or $\bPsi_{<n}$ from $\bPsi_{<(n-1)}$ comes at a computational cost of $\calO(I_nR^3)$.
\end{proof}

\subsection{Update strategy and the entire algorithm}\label{sec::fastALS}

As above, in order to update $\bA^{(n)}$ using the update rule in (\ref{equ_update_An}), we need to compute the two contraction matrices $\bPsi_{>n}$ and $\bPsi_{<n}$. Although these matrices can be computed at a cost of $\calO(I_nR^3)$, they are updated from different sides. The right-contraction matrices are updated right-to-left, i.e., $\bPsi_{>n}$ is computed from $\bPsi_{>(n+1)}$, whereas the left-contraction matrices are updated left-to-right, i.e., $\bPsi_{<n}$ is computed from $\bPsi_{<(n-1)}$. Therefore, when updating $\bA^{(1)}$, $\bA^{(2)}$, \ldots, $\bA^{(N)}$ from left to right, sequentially, we can update the left-contraction matrices $\bPsi_{<n}$, but may need to fully compute the right contraction matrices $\bPsi_{>n}$. Similarly, when updating the factor matrices from right-to-left sequentially, i.e, $\bA^{(N)}$, \ldots, $\bA^{(2)}$, $\bA^{(1)}$, we may need to fully compute $\bPsi_{<n}$, but can update $\bPsi_{>n}$.

In order to fully exploit the progressive computation of $\bPsi_{<n}$ and $\bPsi_{>n}$, and thus further reduce the computational cost of the update rule (\ref{equ_update_An}), we employ the following two-side update strategy
\be
\bA^{(1)}, \bA^{(2)}, \ldots, \bA^{(N-1)}, \bA^{(N)}, \bA^{(N-1)}, \ldots, \bA^{(2)}, \bA^{(1)}, \bA^{(2)}, \ldots \notag
\ee
The estimation procedure which implements the above update order is described in Algorithm~\ref{alg_TT_to_CPD_als}.
A computational trick here is that the right-contraction matrices $\bPsi_{>(N-1)}$, \ldots, $\bPsi_{>2}$, $\bPsi_{>1}$ are precomputed at line 1, before the iterative process starts. For convenience, we denote $\bPsi_{<1} = \bPsi_{>N} = \1_R^T$ row vectors of ones.
The algorithm will first update $\bA^{(n)}$ from left-to-right with $n$ running from 1 to $(N-1)$, then sequentially update $\bA^{(N)}$, $\bA^{(N-1)}$, \ldots, $\bA^{(2)}$, and so on.

After updating $\bA^{(1)}$, we update the next left-contraction matrix $\bPsi_{<2}$. For updating $\bA^{(2)}$, we need not compute $\bPsi_{<2}$ and $\bPsi_{>2}$, as these are available from the previous update and precomputation. However, we will update $\bPsi_{<3}$ after obtaining a new estimate $\bA^{(2)}$.
The left-to-right update procedure is applied similarly to the other factor matrices, and is switched to the right-to-left update after updating $\bA^{(N-1)}$.

When executing the right-to-left estimation process, $n = N, N-1, \ldots, 2$, we update only the right-contraction matrices $\bPsi_{>n}$ from the previous one, i.e., $\bPsi_{>(n+1)}$, while the left-contraction matrices $\bPsi_{<n}$ are available from the left-to-right update procedure.

\begin{remark}
Together with the progressive computation of $\bPsi_{<n}$ and $\bPsi_{>n}$ and the two-sides update strategy, updating $\bA^{(n)}$ requires to update either $\bPsi_{<n}$ or $\bPsi_{>n}$ with a cost of $\calO(I_n R^3)$. The product $\left[{\tG_n}  \right]_{(2)}   \,  \left(\bPsi_{>n}   \odot   \bPsi_{<n} \right)$ comes at a computational cost $\calO(2I_nR^3)$. Therefore, computational cost of each iteration to update $\bA^{(n)}$ is of order $\calO(I_n R^3)$, and is much lower than that of the ordinary ALS $\calO(R \prod_{n} I_n )$ for higher-order tensors.
\end{remark}

{%\def\baselinestretch{1}
 \setlength{\algomargin}{1em}
\begin{algorithm}[t!]
%\SetAlgoInsideSkip
\SetFillComment
\SetSideCommentRight
\CommentSty{\footnotesize}
%\scriptsize
\caption{{An iterative algorithm to fit a rank-$R$ tensor to a TT-tensor}\label{alg_TT_to_CPD_als}}
\DontPrintSemicolon \SetFillComment \SetSideCommentRight
\KwIn{TT-tensor $\tG_1 \ttprod \tG_2 \ttprod \cdots \ttprod \tG_N$:  $(I_1 \times I_2 \times \cdots \times I_N)$ of rank-$(R,\ldots,R)$}
\KwOut{A K-tensor $\llbracket \bA^{(1)}, \bA^{(2)}, \ldots, \bA^{(N)}  \rrbracket$ of rank $R$}
\Begin{
%{\mtcc{\color{blue}{Precompute contraction matrices $\bPsi_{<n}$, $\bL_1 = \1_R^T$\dashrule}}}
%\nl  \; % $\bL_2 = \bG_1^T \bA^{(1)}$\;
%\nl \lFor {$n = 2, \ldots, N$}{
% $\bPsi_{<n}  =  \left[  \tG_{n-1} \right]_{(3)}  \left(\bA^{(n-1)}  \odot  \bL_{n-1} \right)$
%}
{\mtcc{{\color{blue}{Precompute contraction matrices $\bPsi_{>n}$,  $\bPsi_{>N} = \1_R^T$}}\dashrule}}
%\nl\; %, $\bR_{N-1} = \bG_{N} \, \bA^{(N)}$\;
\nl \lFor {$n = N-1, \ldots, 1$}{
 $\bPsi_{>n}  =  \left[\tG_{n+1}   \right]_{(1)}    \left( \bPsi_{>(n+1)} \odot  \bA^{(n+1) *} \right)$
}
\Repeat{a stopping criterion is met}{
{\mtcc{{\color{blue}{Update $\bA^{(n)}$ from left to right}}\dashrule}}
\For {$n = 1, \ldots, N-1$}{
\nl ${\bA^{(n)}} = \left[{\tG_n}  \right]_{(2)}   \,  \left(\bPsi_{>n}   \odot   \bPsi_{<n} \right)  ({\bGamma_n^{*}})^{-1} $\;
\nl $\bPsi_{<(n+1)}  =  \left[  \tG_{n} \right]_{(3)}  \left(\bA^{(n)  *}  \odot  \bPsi_{<n} \right)$%\tcc*{Compute $\bPsi_{<n}$ from $\bL_{n-1}$}
}
{\mtcc{{\color{blue}{Update $\bA^{(n)}$ from right to left }}\dashrule}}
\For {$n = N, \ldots, 2$}{
\nl ${\bA^{(n)}} = \left[{\tG_n}  \right]_{(2)}   \,  \left(\bPsi_{>n}   \odot   \bPsi_{<n} \right)  ({\bGamma_n^{*}})^{-1} $\;
\nl $ \bPsi_{>(n-1)} = \left[\tG_{n}   \right]_{(1)}    \left( \bPsi_{>n} \odot  \bA^{(n)  *} \right)   $ %\tcc*{Compute $\bR_{n-1}$ from $\bPsi_{>n}$}
}
}
}
\end{algorithm}
}

Finally, to complete the algorithm, we introduce an efficient expression for the computation of the cost function in (\ref{equ_cost_fitTT}) or (\ref{equ_cost_An}) in the form
\be
D = \frac{1}{2} \left(\|\tG\|_F^2 - \tr\left(\bA^{(n) H} \, \left[{\tG_n}  \right]_{(2)}   \,  \left(\bPsi_{>n}   \odot   \bPsi_{<n} \right) \right) \right)  \, ,   \notag \label{fast_assess_cost}
\ee
for $n = 1, 2, \ldots, N$, and with the complexity of $\calO(I_nR^2)$ .

\section{Extensions of Other Optimization Algorithms for CPD}

In addition to the ALS algorithm, we can derive other iterative algorithms for the optimization problem in (\ref{equ_cost_fitTT}), e.g., the Levenberg-Marquardt (LM) algorithm\cite{Phan_fLM}.
The results in Lemma~\ref{lem_progressive_RnLn} are useful for fast computation of the gradients of the objective function with respect to the factor matrices, i.e., the product of the mode-$n$ matricization of the TT-tensor $\tG$ and the Khatri-Rao product of all-but-one factor matrices $\bA^{(k)}$, $k \neq n$. Following (\ref{equ_update_An}), these terms are given by
\be
 [\tG]_{(n)} \left(\bigodot_{k \neq n} \bA^{(n)} \right) = \left[{\tG_n}  \right]_{(2)}   \,  \left(\bPsi_{>n}   \odot   \bPsi_{<n} \right) \, \label{eq_gradients}.
 \ee
For the LM update rule, we can exploit a fast inversion of the Hessian matrix in \cite{Phan_fLM} with a cost of $\calO(R^6)$. Implementation of the LM algorithm for the decomposition in (\ref{equ_cost_fitTT}) is similar to that of the fLM algorithm for CPD \cite{Phan_fLM}, except for the gradients computed in (\ref{eq_gradients}).

Another advantage of the proposed method is that it quickly provides a good estimate for the constrained CPD, e.g., the error preserving correction method or the CPD with a bounded error \cite{Phan_EPC} in the form
\be
\min  \;\; \sum_{r} \|\ba^{(1)}_{r}  \circ \ba^{(2)}_{r} \, \circ \cdots \,   \circ  \ba^{(N)}_{r} \|_F^2  \;\; \text{s.t.} \;\;\quad \|\tY - \tX\|_F^2  \le  \varepsilon^2 \notag
\ee
or the CPD with bounded norm of rank-1 tensors
\be
\min \;\; \quad \|\tY - \tX\|_F^2  \;\;
\text{s.t.} \;\; \sum_{r} \|\ba^{(1)}_{r}  \circ \ba^{(2)}_{r} \, \circ \cdots \,   \circ  \ba^{(N)}_{r} \|_F^2 \le  \delta^2 \notag \,.
\ee

\section{Simulations and Results}\label{sec:simulation}

Our proposed framework was evaluated over case studies spanning a variety of technical problems to verify its advantages in estimating tensor ranks, $R$. For rigour, this was achieved even when $R$ exceeds the largest tensor dimension, that is $R>\text{max}(I_n)$, $n=1, \dots, N$, where $N$ is the order of the tensor. As an illustrative example, the case of a simple decomposition of random tensors is first considered, followed by more practical scenarios including blind source identification in communication systems, and blind source separation. Finally, the advantages of the proposed method in tensor compression are demonstrated through low-rank approximation of a Hilbert tensor.

\begin{example}{\bf Decomposition of random tensors.}{\label{ex_n5and10_I5_R5_10}}
\begin{figure}[t]
\centering
%\psfrag{TT2CPD}[lc][lc]{\scalebox{1}{\color[rgb]{0,0,0}\setlength{\tabcolsep}{0pt}\hspace{0cm}\footnotesize\begin{tabular}{c}TT2CPD-Alg.~\ref{alg_TT_to_CPD}\end{tabular}}}%
\psfrag{XRankConv}[lc][lc]{\scalebox{1}{\color[rgb]{0,0,0}\setlength{\tabcolsep}{0pt}\hspace{0cm}\footnotesize\begin{tabular}{c}Alg.~\ref{alg_TT_to_CPD_exact}\end{tabular}}}%
\subfigure[$R = 5$]{\includegraphics[width=.49\textwidth,trim = 0.0cm .0cm 0cm 0cm,clip=true]{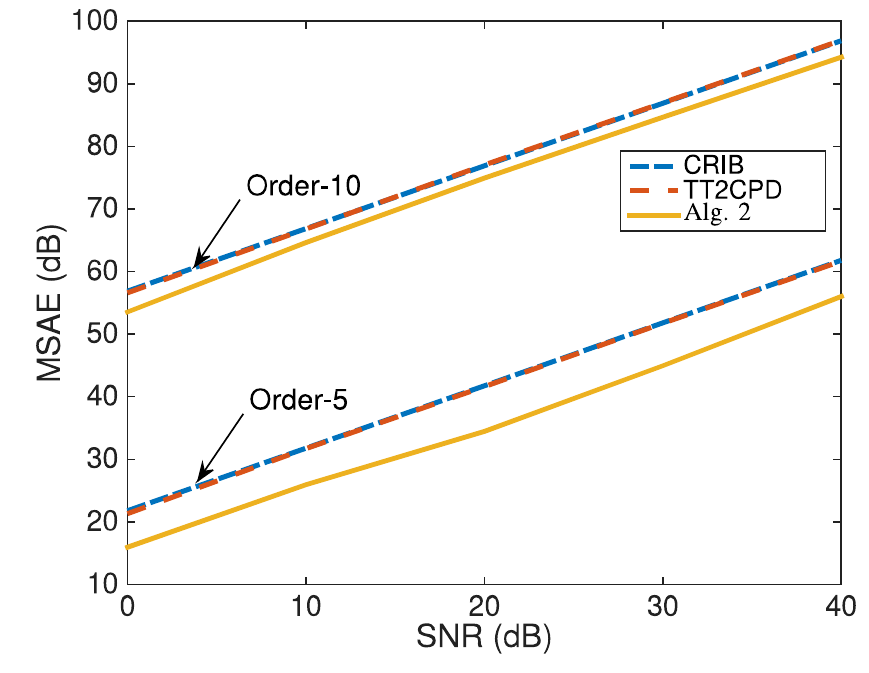}}
\\
\subfigure[$R = 10$]{\includegraphics[width=.49\textwidth,trim = 0.0cm .0cm 0cm 0.0cm,clip=true]{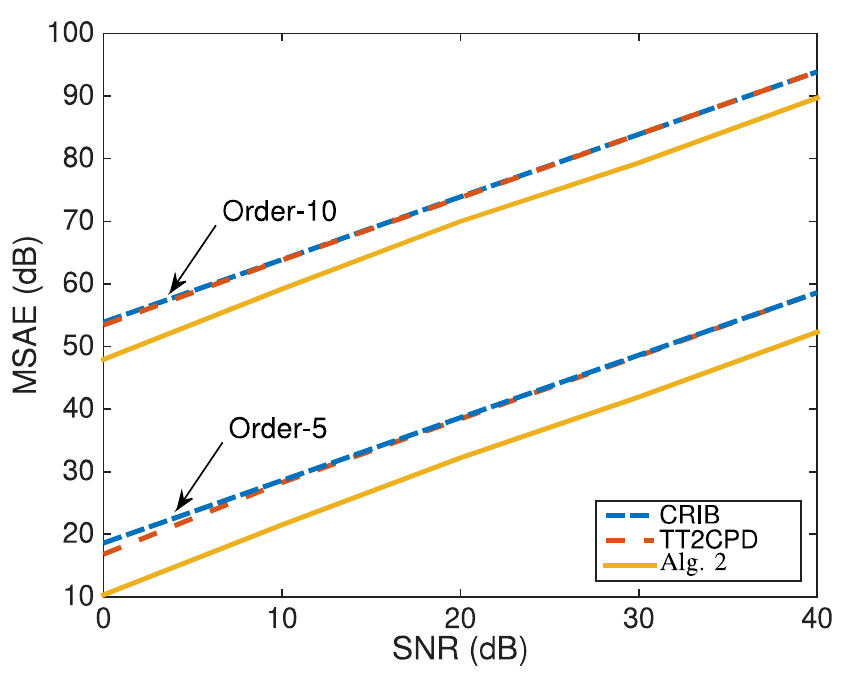}}
\caption{The MSAE of components for CPD of order-5 and order-10 tensors of size $5 \times 5 \times \cdots \times 5$ in Example~\ref{ex_n5and10_I5_R5_10}.}
\label{fig_n5_10_i5_r10}
\end{figure}
The effectiveness of Algorithms 2 and 3 was validated through a decomposition of random noisy tensors, to serve as a representative example and provide a physical intuition behind the approach.

We considered order-$N$ noisy tensors
\be
\tY =  \llbracket \bA^{(1)}, \bA^{(2)}, \ldots, \bA^{(N)}\rrbracket + \tE \,  \notag
\ee
which are of size $5 \times 5 \times \cdots \times 5$ and rank-$R = 5$ or $R = 10$, where $N = 5$ or 10. Additive Gaussian noise tensor, $\tE$, was added to $\tY$ to yield the noise levels SNR = 0, 10, 20, 30 or 40 dB.
Tensor $\tY$ was  approximated by TT-tensors, whose the highest TT-rank is $R$; then rank-$R$ K-tensors were constructed using Algorithm~\ref{alg_TT_to_CPD_exact}.
The performance was assessed through the Mean Squared Angular Error (MSAE)\footnote{$SAE(\bx,\hat{\bx}) = -20\log_{10}\arccos \frac{\bx^T {\hat{\bx}}}{\|\bx\|_2 \|\hat{\bx}\|_2}  \quad (dB)$} in (dB) and for various test cases: the tensor rank $R = 5$ and 10, the tensor order $N = 5$ and 10, nearly noise-free case, SNR = 40 dB, and heavy noise case, SNR = 0 dB.
The MSAEs were compared with the Cram\'er-Rao induced bounds (CRIB) \cite{Petr_CRIB} in Fig.~\ref{fig_n5_10_i5_r10} and indicate that even in the presence of noise, the MSAEs of estimated components by Algorithm~\ref{alg_TT_to_CPD_exact} were only a few dB lower than theoretical CRIBs.
For example, at SNR = 40 dB, the MSAEs were on average 5.6 dB 
lower than the CRIB for the decomposition of order-5 tensors of rank-5, and 2.7 dB for the decomposition of order-10 tensors  of the same rank.
With the tensor order of $N=5$ and the rank $R = 10$, i.e.,  when the rank exceeded tensor dimensions $I_n = 5$, the differences of SAE were slightly higher, 6.3 dB for order-5 tensors, and 4.1 dB for order-10 tensors.
The MSAEs in (dB)  of the components were found to linearly decrease with the SNRs (increase in noise power).
The estimation accuracy of the factor matrices was significantly improved and attained the CRIB when using Algorithm~\ref{alg_TT_to_CPD}.
%
%R= 5, N = 5
% 5.3471    5.7741    7.1224    6.7911    5.5905
% R = 5, N = 10;
%    3.0470    2.1547    2.0905    2.2483    2.6827
%
%R= 10, N = 5
%    6.4804    6.7410    6.2596    6.5212    6.2537
%R = 10, N = 10;
%    5.4615    4.6118    3.8288    4.5770    4.1203
 \end{example}

\begin{example}{\bf Blind identification (BI) in a system of $2$ mixtures and $R$ binary signals.}\label{ex_qam_bi_2xR}
\end{example}

The problem of blind source identification is of great relevance in wireless communications. As the signals transmitted by different users correspond to rank-1 terms in the case of line-of-sight propagation \cite{Sidiropoulos00Bro}, the use of CPD becomes natural.
We considered a linear system which consists of $I$ sensors and receives signals $\bX=\bH \bS +\bN$ from $R$ stationary sources, $\bS$, in the presence of additive noise, $\bN$ (see Fig.~\ref{fig_bi_columnstyle} for a general principle).
Given only the knowledge of the noisy observations, the task is to estimate the mixing matrix, $\bH \in \Real^{I \times R}$,  under some mild assumptions, i.e., the sources are statistically independent and non-Gaussian, their number is known, and the matrix $\bH$ has no pairwise collinear columns (see also \cite{YeredorCaf,Comon20062271}).%

We applied the well-known approach proposed in \cite{YeredorCaf,Comon20062271} which creates a higher-order tensor, $\tY$, generated from the observations, $\bX$, by means of partial derivatives of the second Generalised Characteristic Functions (GCFs) of the observations, $\Phi_{\bx}(\bu) = \log{\mbox{E} \left[\exp(\bu^T \bx) \right]}$, at multiple processing points, $\bu$ of length $I$
\be
\Psi_{\bx}(\bu) &=& \frac{\partial^N \Phi_{\bx}(\bu) }{\partial \bu^N}  =  \frac{\partial^N \Phi_{\bs}(\bH^T \bu)}{\partial \bu^N}
 \notag \\
&=&  \Psi_{\bs}(\bH^T\bu) \times_1 \bH  \times_2 \bH \cdots \times_N \bH \,, \notag \label{eq_CP_gcf}
\ee
where $\Psi_{\bs}(\bv)$ are the $N$th-order derivatives of $\Phi_{\bs}(\bv)$ with respect to a vector, $\bv$, of the length $R$, which results in an $N$th-order diagonal tensor, because the sources are statistically independent.
More detail on the generation of the high order derivative tensors is presented in \cite{cichocki2017tensor}.

 \begin{figure}[t!]
\centering
\includegraphics[width=.95\linewidth, trim = 0.0cm .0cm 0cm 0cm,clip=true]{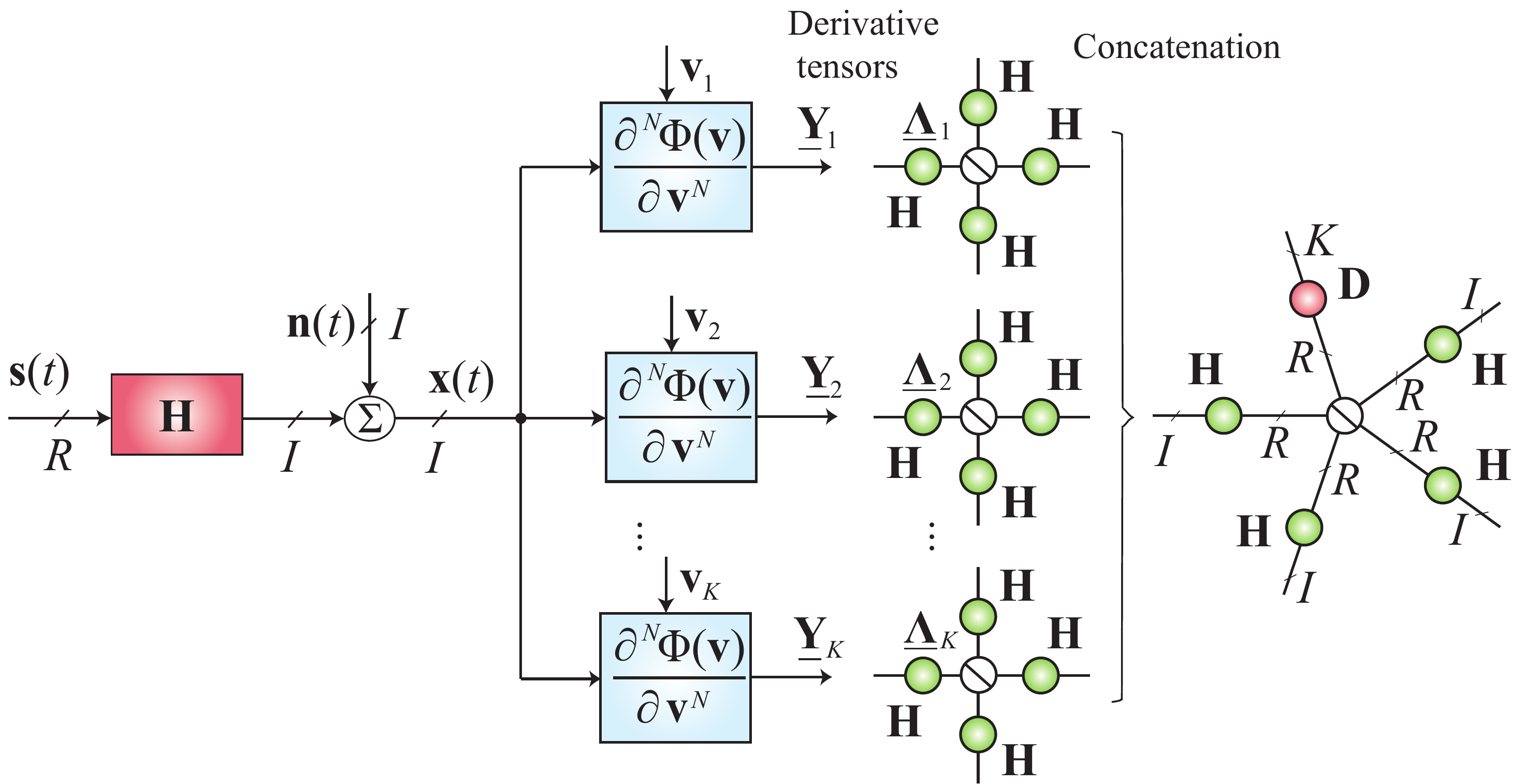}\label{fig_bi_columnstyle1}
\caption{Tensorization and tensor-based approach to blind identification. The task is to estimate the mixing system $\bH$ from only the knowledge of the noisy observations $\bX$.
A high dimensional tensor $\tY$ is generated from the observations $\bX$ by means of higher-order statistics (cumulants) or partial derivatives of the second generalised characteristic functions of the observations. A decomposition of $\tY$ by CP, INDSCAL, CONFAC or BC decomposition allows us to retrieve the mixing matrix $\bH$.}
\label{fig_bi_columnstyle}
\end{figure}

The mixing system in this example consisted of two mixtures, $I = 2$, linearly composed by $R$ signals of length $T = 100  \times \, 2^R$, the entries of which took the values 1 or -1, i.e., $s_{r,t}  =  1$ or $-1$. The mixing matrix $\bH$ of size $2 \times R$ was randomly generated, where $R = 3, 4, \ldots, 9$. The Gaussian noise was added to the mixtures $\bX = \bH \, \bS$, to yield the signal-to-noise ratio SNR = 20 dB.

We constructed 52 derivative tensors $\tP_i^{(N)} = \Psi^{(N)}_{\bx}(10 \bu_i)$ for each derivative order-$N = 3, \ldots, 8$, evaluated at 52 processing points, $\bu_i$, where $i = 1, 2\, \ldots, 52$.
The first two processing points $\bu_1$ and $\bu_2$ were two leading left singular vectors of $\bX$, while $\bu_3, \ldots, \bu_{52}$ were generated such that $\bu_i =  c_i \, \bu_1  +  \sqrt{1-c_i^2} \, \bu_2$, where $c_i = \bu_1^T \bu_i$ were uniformly distributed over a range of $[-0.99, 0.99]$, i.e., $c_i= -0.99, -0.9496, -0.9092, \ldots, 0.99$.
%Note that the intensity of the vectors $\bu_i$ was scaled up by a factor of 10, before generating the derivative tensors $\tP_i^{(N)}$.

Next, from the derivative tensors $\tP^{(N)}_{i}$, we constructed 50 tensors, $\tY_i$, $i = 1, 2, \ldots, 50$, of $(N+1)$th-order and of size $R\times \cdots \times R \times 3$ as follows
\be
	\tY_i(:,\ldots,:,1) &=& \tP_1^{(N)}  - \bar{\tP}^{(N)}, \quad \notag \\
	\tY_i(:,\ldots,:,2) &=& \tP_2^{(N)}	- \bar{\tP}^{(N)}, \quad  \notag \\
	\tY_i(:,\ldots,:,3) &=& \tP_{i+2}^{(N)} - \bar{\tP}^{(N)} \notag,
\ee
where $\bar{\tP}^{(N)} = \frac{1}{50}\sum_{i = 3}^{52} \tP_{i}^{(N)}$.

To estimate the mixing matrix from 50 CPDs of $\tY_i$ of rank-$R$, we applied the CPD with prior TT-compression.
For each estimation, we computed the mean of Squared Angular Errors (MSAE) $SAE(\bh_r, \hat{\bh}_r) = -20\log_{10}\arccos(\frac{\bh_r^T \hat{\bh}_r}{|\bh_r|_2 |\hat{\bh}_r|_2})$ over all columns $\bh_r$.
%For each run, there were 50 MSAEs corresponding to 50 estimates $\hat{\bH}_i$ obtained by decompositions of 50 tensors $\tY_i$.
The mean over 50 MSAEs indicated the average accuracy of estimations of $\bH$, while the maximum of 50 MSAEs indicated the best estimation performance with a suitably chosen processing point, $\bu_i$, combined with $\bu_1$ and $\bu_2$.

Fig.~\ref{fig_bi_2xR_tt2cp} illustrates the performance over 100 runs for $R  = 3, 4, \ldots, 9$. With a suitably chosen processing point, $\bu_i$, the decomposition of the derivative tensors yielded good estimation of the mixing matrix. Moreover, performances with a prior TT-decomposition were more stable and yielded an approximately 2 dB higher MSAE than those using the standard CPD for the derivative tensors of orders 7 and 8 and for a high number of sources.

%\begin{figure}[Ht!]
%\centering
%\subfigure[CPD]{
%\includegraphics[width=.45\linewidth, trim = 0.0cm 0cm 0cm 0cm,clip=true]{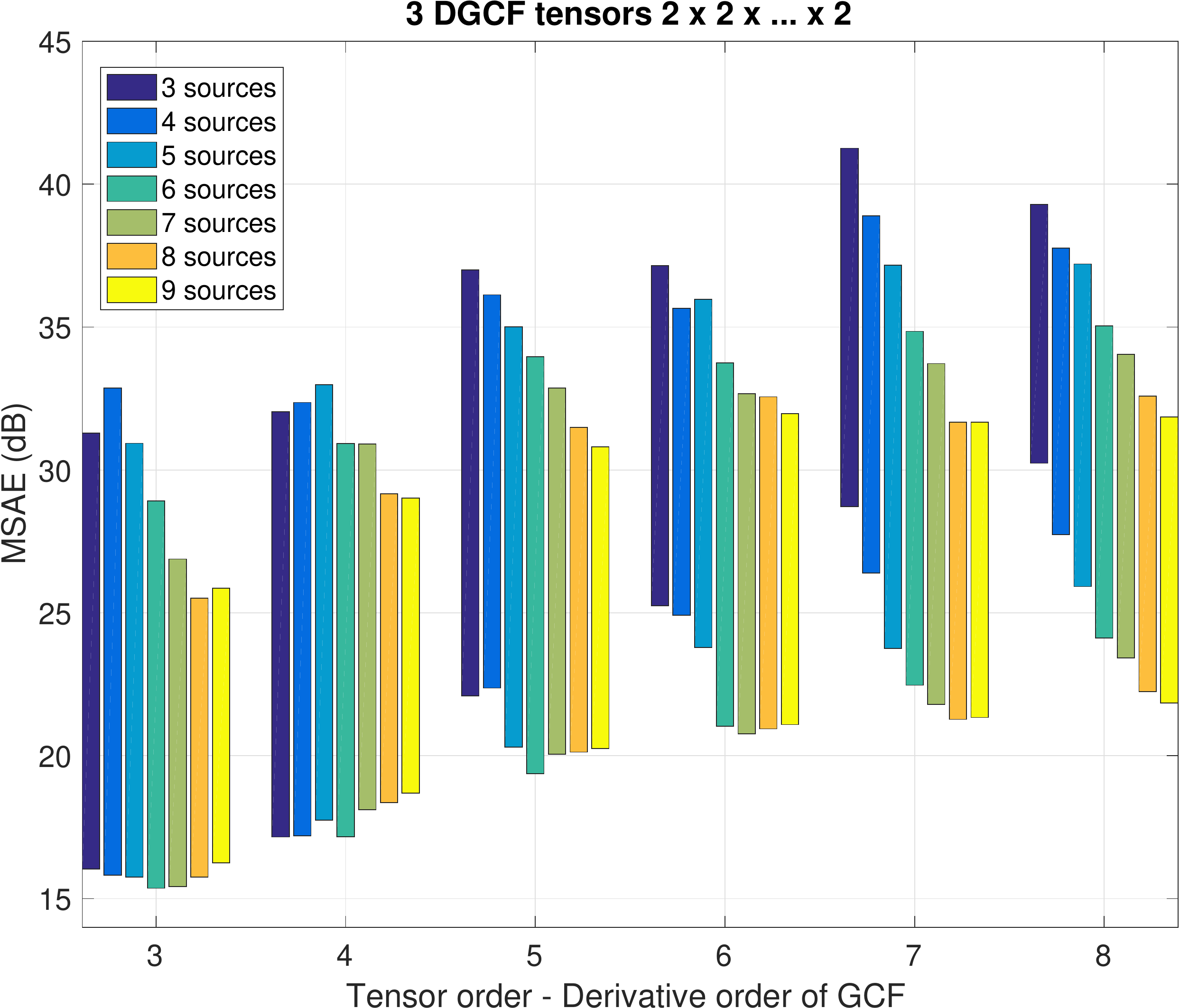}}
%\subfigure[TT-CDP]{
%\includegraphics[width=.45\linewidth, trim = 0.0cm  0cm 0cm 0cm,clip=true]
%{fig_3dgcf_2m_9s_mean_max_sae_tt2cpd}}
%\caption{Mean SAE (in dB) in the estimation of the mixing matrix $\bH$ from only two mixtures, achieved by CP decomposition of three $2\times 2 \times \cdots \times 2$ derivative tensors of the second GCFs. Small bars in red represent the mean of MSAEs, obtained by decomposition of single cumulant tensors.}
%\label{fig_bi_2xR}
%\end{figure}

{
\begin{figure}[t!]
\centering
\subfigure[CPD]{
\includegraphics[width=.85\linewidth, trim = 0.0cm 0cm 0cm .6cm,clip=true]{fig_3dgcf_2m_9s_mean_max_sae_cpd}}
\subfigure[TT2CDP]{
\includegraphics[width=.85\linewidth, trim = 0.0cm  0cm 0cm .6cm,clip=true]
{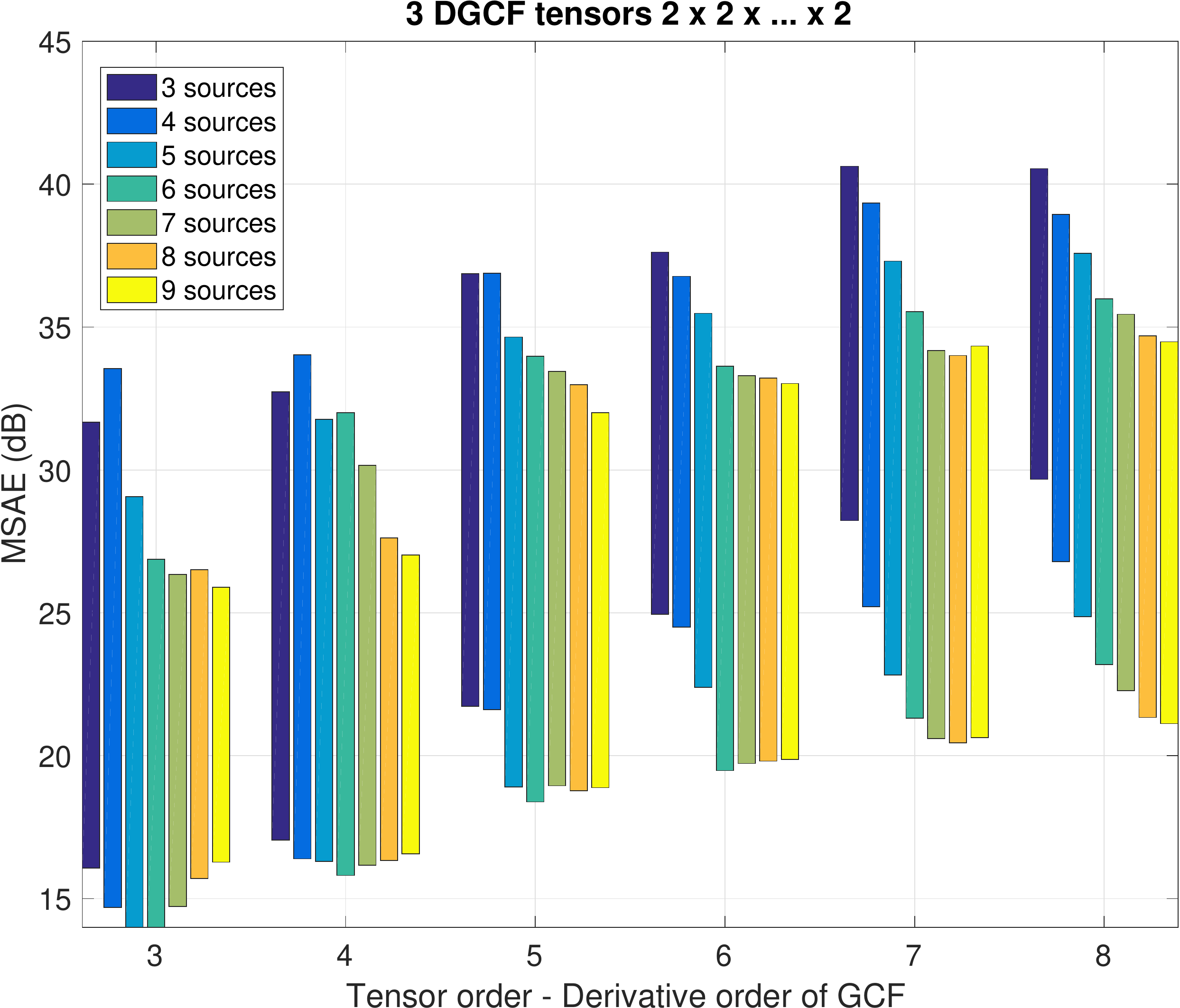}}
\caption{Performance of CPD and the proposed TT2CPD approach
(a) Mean SAE (in dB) in the estimation of the mixing matrix $\bH$ from only two mixtures, achieved by CPD of three $2\times 2 \times \cdots \times 2$ derivative tensors of the second GCFs.
(b) Mean SAE (in dB) in estimation of the mixing matrix $\bH$ by CPD aided with a prior TT decomposition.}
\label{fig_bi_2xR_tt2cp}
\end{figure}
}

\begin{example}{\bf Blind separation of damped sinusoid signals.}\label{ex_bss_exp}
\end{example}

It is well-known that real or complex exponentials have a rank-1 structure. This is a perfect match for the scope of the proposed framework, as any linear combination of sinusoids can hence be decomposed in rank-1 terms.
The use of our proposed algorithm is next illustrated for the extraction of complex-valued damped sinusoids from a single mixture which is corrupted by Gaussian noise.
Consider a noisy signal, $y(t)$, created as a combination of $R = 3$ complex valued damped sinusoids, $x_r(t)$, to yield
\be
y(t) =  a_1 x_1(t)  +  a_2  x_2(t)  +  a_3  x_3(t)  + n(t) \notag ,
\ee
where
\be
x_r(t) = \exp(-i (\omega_r t  + \phi_r) - \tau_r t)  \notag ,
\ee
and $\omega_r = 20\pi r$, $\tau_r = 2r$, $\phi_r = \frac{\pi r}{2R+1}$, $t = 0, 1/300, \ldots, (T-1)/300$, and $T = 413$ samples.  The weights, $a_r$, were set such that
the component sources were equally contributing to the mixture.

In order to extract the source, $x_r(t)$, we adopted the method proposed in \cite{LathauwerTBSS} which comprises two steps: tensorization and tensor decomposition. More specifically, we first constructed from the signal $y(t)$ an order-4 Toeplitz tensor of size $192 \times 16 \times 16 \times 192$
\cite{cichocki2017tensor}, then reshaped it to an order-18 tensor of size $12 \times 2 \times 2 \times \cdots \times 2 \times 12$.
After such tensorization, each signal $x_r(t)$ yields a tensor $\tX_r$ of rank-1, while the observed signal $y(t)$ yields a tensor of rank-$R = 3$.
Hence, the approximation of this tensor by a CPD of rank-3 produced three rank-1 tensors, each being an estimation of the tensor $\tX_r$.

Algorithms~\ref{alg_TT_to_CPD_exact_svd} and \ref{alg_TT_to_CPD_als} were used to estimate the three rank-1 tensors.
The higher-order complex-valued tensors were first approximated by TT-tensors using the alternating double-core update (ADCU) algorithm\cite{Phan_TT_part1}.
The sources were then reconstructed from the estimated Toeplitz tensors. Mean and median values of SAEs (in dB) of the estimated signals over 100 independent runs are compared in Fig.~\ref{fig_bss_sinusoid}. The ordinary direct CPDs of high order tensors using the ALS algorithm could not retrieve the latent signals in most of the tests, although this algorithm succeeded in a few runs. Algorithm~\ref{alg_TT_to_CPD_exact_svd} for the exact TT-CPD conversion worked well, even for a high noise level. Its median SAEs were comparable with those achieved by CPD using Algorithm~\ref{alg_TT_to_CPD_als}, although its mean SAEs  were approximately 4-6 dB lower. The most stable results were achieved by CPD using Algorithm~\ref{alg_TT_to_CPD_als}.

{
\begin{figure}[t!]
\centering
%\subfigure[CPD]
{
\includegraphics[width=.85\linewidth, trim = 0.0cm 0cm 0cm 0cm,clip=true]{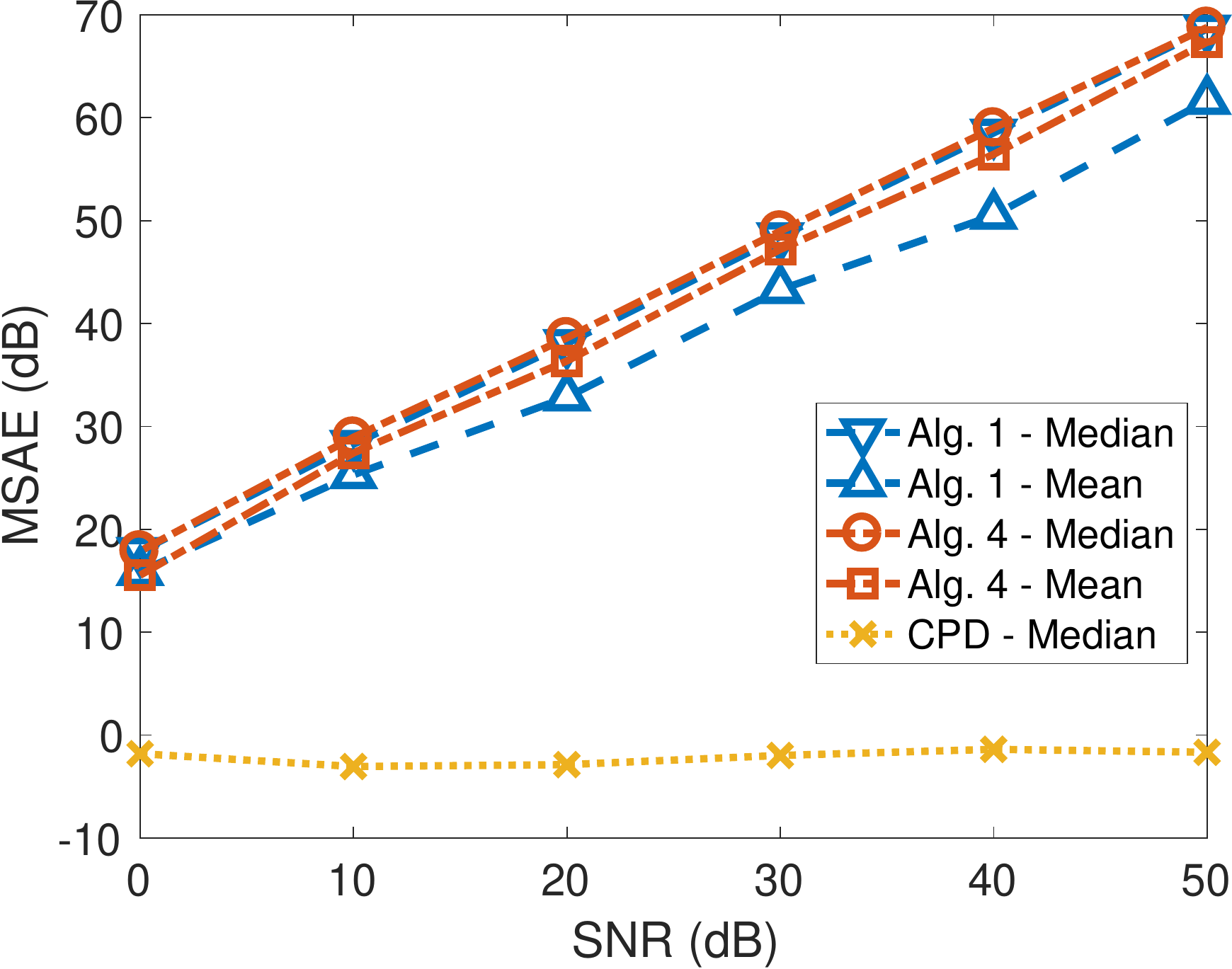}}
\caption{
Mean SAE (in dB) in the estimation of the complex-valued damped sinusoids from a single mixture through CPDs of order-18 tensors in Example~\ref{ex_bss_exp}.}
\label{fig_bss_sinusoid}
\end{figure}
}

Fig.~\ref{fig_bss_sinusoid_18vs20} confirms the efficiency of Algorithm~\ref{alg_TT_to_CPD} in another simulation {scenario}, where the signal $y(t)$ had a shorter length of $T = 123$. The Toeplitz tensors of order-6 and of size $48 \times 8\times 8\times 8 \times8 \times48$ were generated from the signal, {and were subsequently} reshaped to order-20 tensors of size $6 \times 2 \times 2 \times \cdots \times 2 \times 6$. Because of the shorter signal, the estimation accuracy was on average 4 dB of SAE worse than the results in the previous case. Nevertheless, we were still able to retrieve successfully the three complex-valued signals.

{
\begin{figure}[t!]
\centering
%\subfigure[CPD]
{
\includegraphics[width=.85\linewidth, trim = 0.0cm 0cm 0cm 0cm,clip=true]{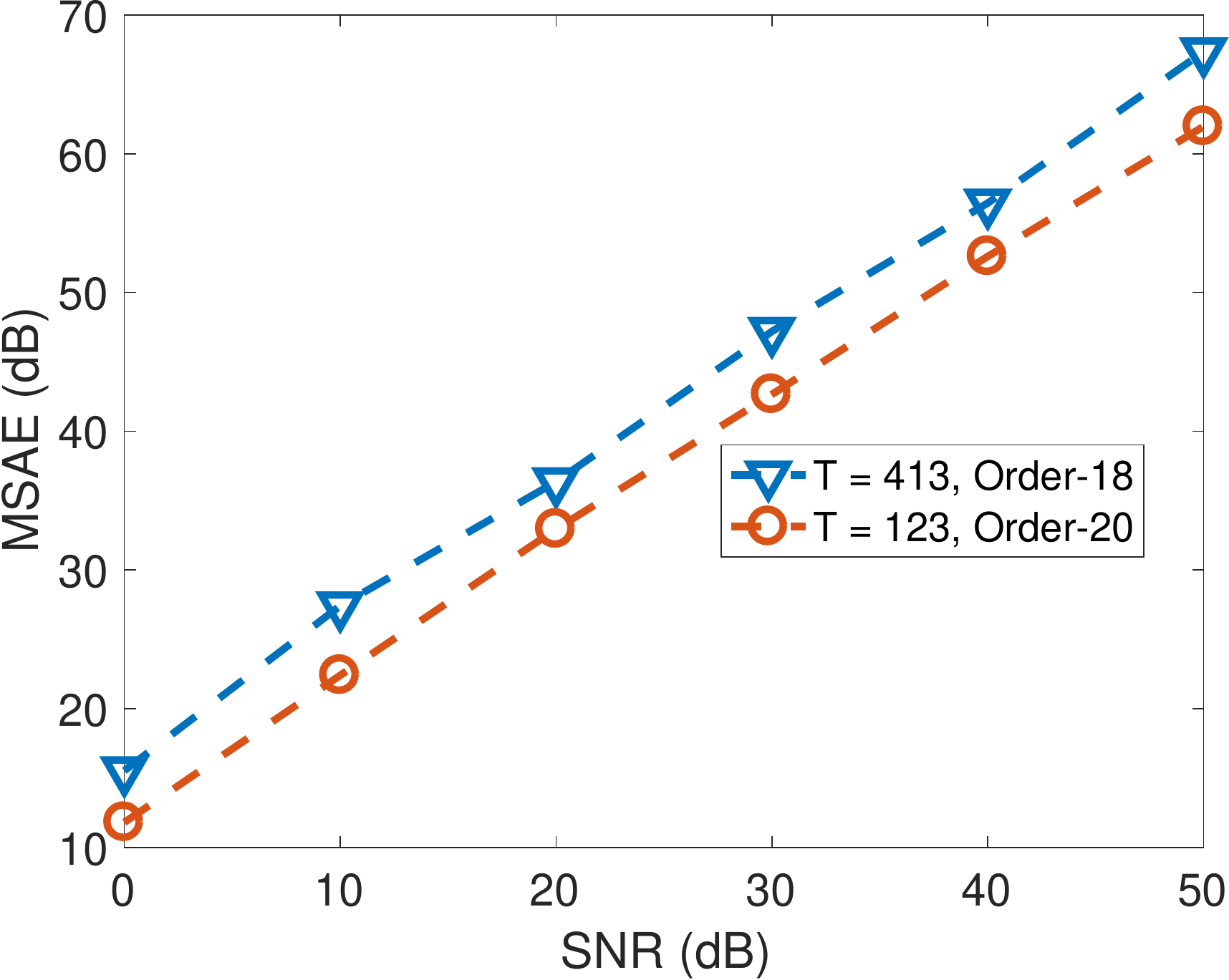}}
\caption{
Mean SAE (in dB) in the estimation of the complex-valued damped sinusoids from a single mixture of length $T = 123$ and $T = 413$ in Example~\ref{ex_bss_exp}.}
\label{fig_bss_sinusoid_18vs20}
\end{figure}
}

\begin{example}{\bf Low-rank approximation of a Hilbert tensor}
In this example, we approximated Hilbert tensors \cite{2016arXiv161102357S} of order $4$ and 6 and dimension $I_1= \cdots = I_N = 20$ defined as
\be
\tH(i_1, i_2, \ldots, i_N) = \frac{1}{i_1 + i_2 + \cdots +i_N - N+1} \notag \,.
\ee
For this tensor, we ran algorithms over 5000 iterations, but the decomposition could be stopped earlier if the consecutive approximation errors differed by less than $10^{-10}$.
The tensor was well approximated by a tensor of rank-$R = 7$, with a relatively error of $5\times 10^{-5}$, as shown in Fig.~\ref{fig_apx_Hilbert}.
The results show that this decomposition was quite challenging for the ALS algorithm. The non-linear least squares (NLS) algorithm \cite{Sorber-tensorlab} worked better than the ALS algorithm. For decomposition with the estimated ranks of $R = 6, 7,\ldots$, the NLS reached the maximum of iterations, and might need more iterations to achieve the best approximation errors.
The Levenberg-Marquardt (fLM) algorithm\cite{Phan_fLM} worked well in this example. We applied the TT2CP conversion and obtained compatible performances to those using fLM.

{
\begin{figure}[t!]
\centering
%\subfigure[CPD]
{
\includegraphics[width=.85\linewidth, trim = 0.0cm 0cm 0cm 0cm,clip=true]{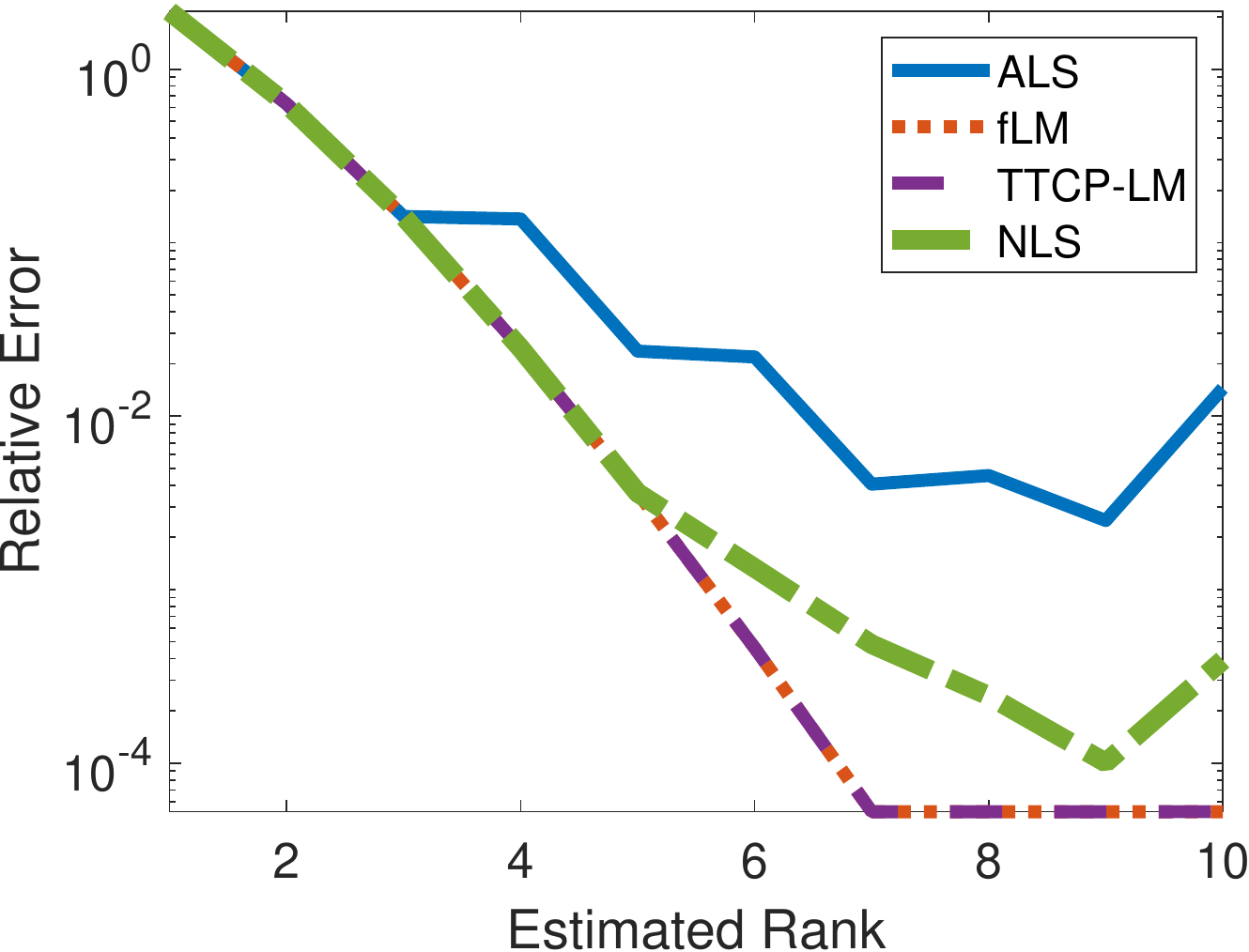}}
\includegraphics[width=.85\linewidth, trim = 0.0cm 0cm 0cm 0cm,clip=true]{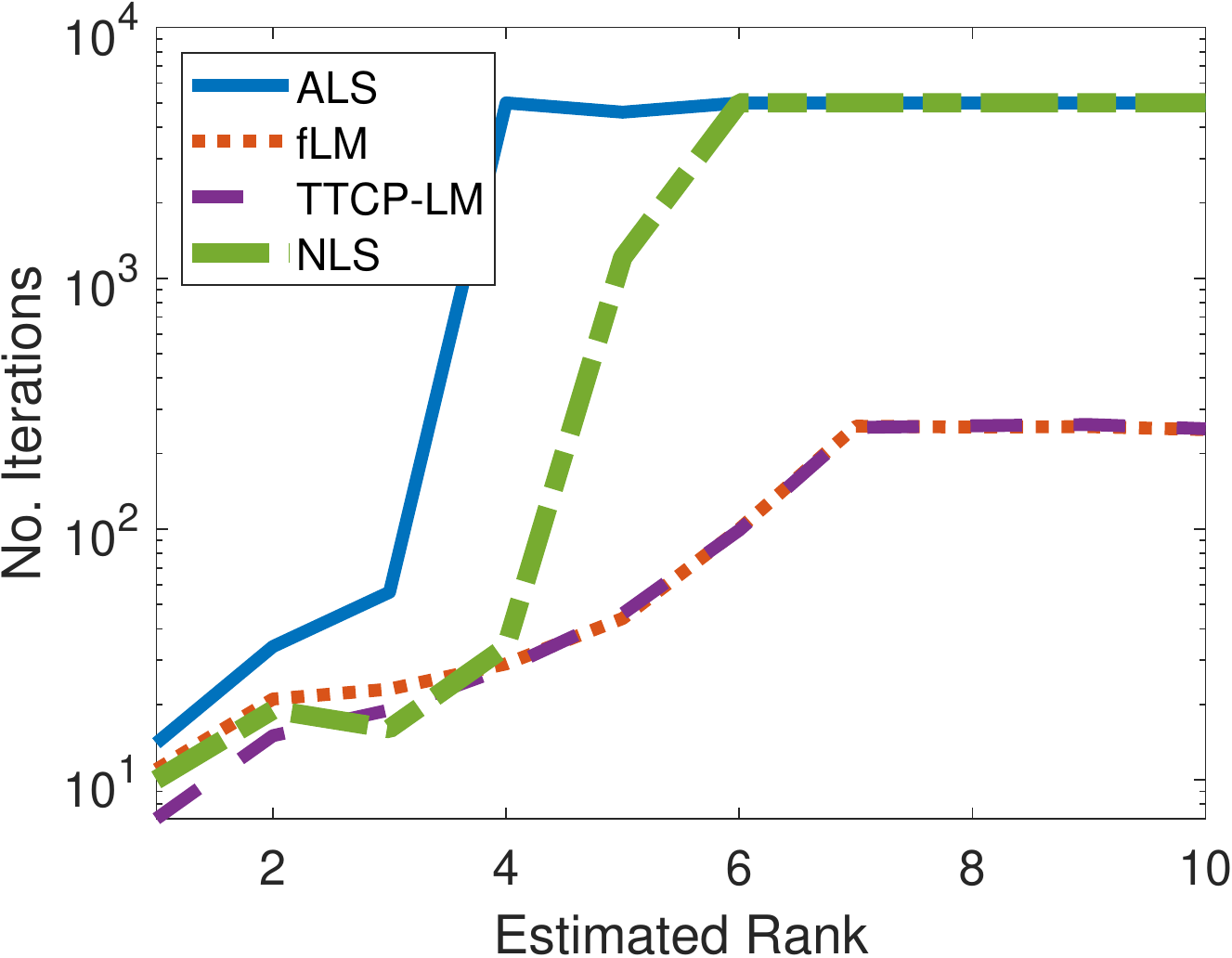}
\caption{Relative approximation errors and number of iterations of the CPD algorithms.}\label{fig_apx_Hilbert}
\end{figure}
}
\end{example}

{\bf Approximation with a predefined error bound.}
We next present low-rank approximations of the Hilbert tensor with exact error bounds of $\varepsilon = 10^{-2}$ and $10^{-3}$, that is
\be
\|\tY - \hat{\tY}\|_F = \varepsilon \, \|\tY\|_F
\ee
such that rank-1 tensor components of the estimated tensor had a minimum norm\cite{Phan_EPC}. This constrained decomposition is also known as the Error Preserving Correction (EPC) method.

We used the SQP algorithm for EPC, and initialized it by the leading singular vectors or tensors estimated using the CP-ALS algorithm.
The results were compared with those based on the TT2CPD method. More specifically, the Hilbert tensor was first approximated by a TT-tensor with an exact relative error $\varepsilon$ using the Alternating Single Core Update (ASCU)\cite{Phan_TT_part1}. The obtained TT-tensor had ranks of $(4,5,5)$ and was then approximated by Kruskal tensors with rank $R = 1, \ldots, 10$.

The relative approximation errors are compared in Fig.~\ref{fig_apx_Hilbert_epc}. For the relative approximation error bound of $10^{-2}$, the EPC obtained consistent results which matched the given error bound with a rank of $R=3$.
However, for the lower relative error bound of $10^{-3}$, EPC worked well and was stable only with the TT2CPD method. Notice that the tensor approximation should have rank of $R =5$ to attain the required error bound.

For decomposition of the Hilber tensor of order-6, EPC obtained the relative error bound of $\varepsilon = 10^-3$ using diffenent initialization methods. However, for approximations with a lower relative error of $10^{-4}$,  with only TT2CPD, the EPC method achieved the desired  goal. The relative errors are compared in Fig.~\ref{fig_apx_Hilbert6_epc}.

\begin{figure}[t]
\centering
%\subfigure[CPD]
%\includegraphics[width=.85\linewidth, trim = 0.0cm 0cm 0cm 0cm,clip=true]{}
\includegraphics[width=.85\linewidth, trim = 0.0cm 0cm 0cm 0cm,clip=true]{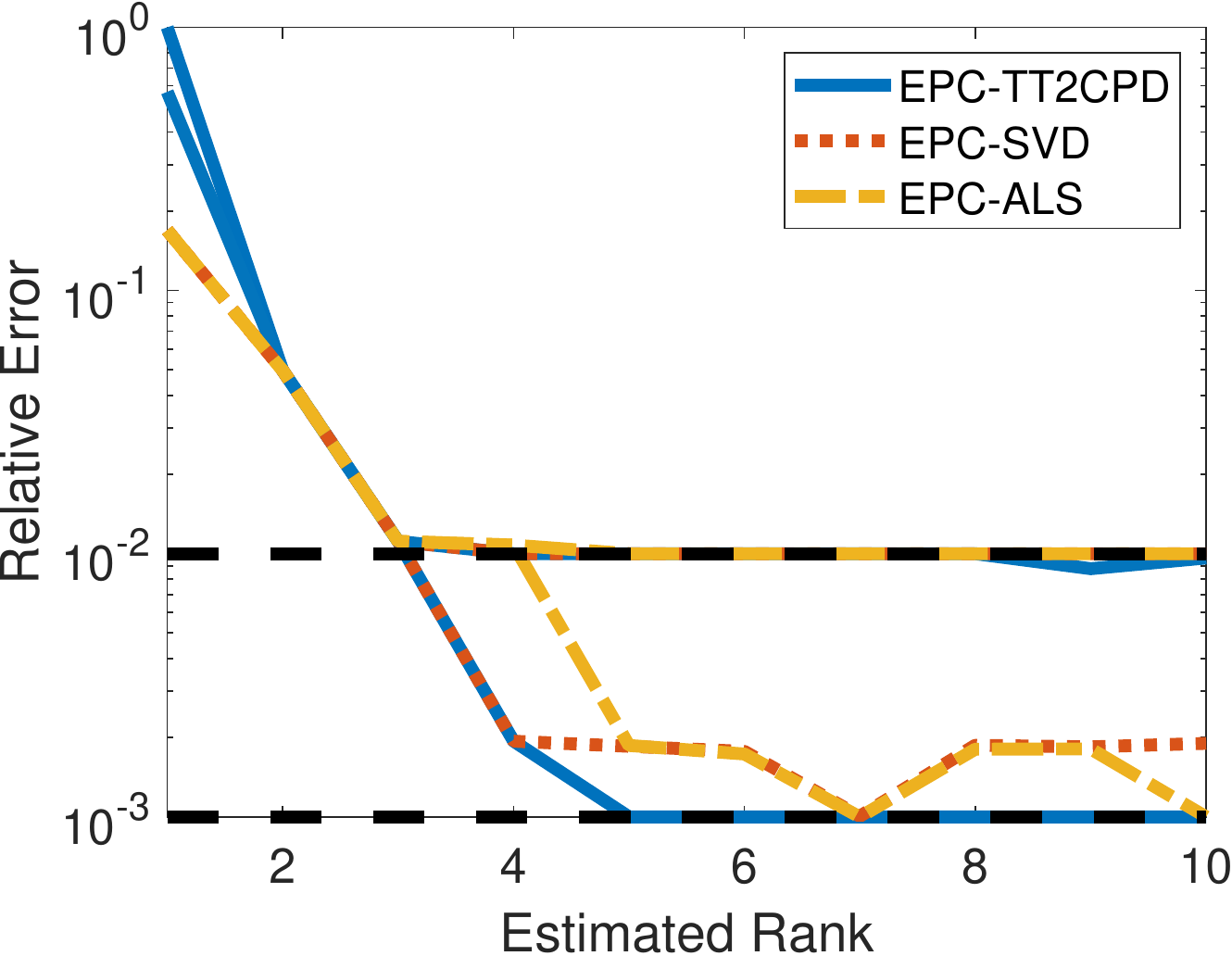}
\caption{Relative errors of the EPC using different initialization methods for decomposition of the Hilbert tensor of order 4.}\label{fig_apx_Hilbert_epc}
\end{figure}

\begin{figure}[t]
\centering
%\subfigure[CPD]
\includegraphics[width=.85\linewidth, trim = 0.0cm 0cm 0cm 0cm,clip=true]{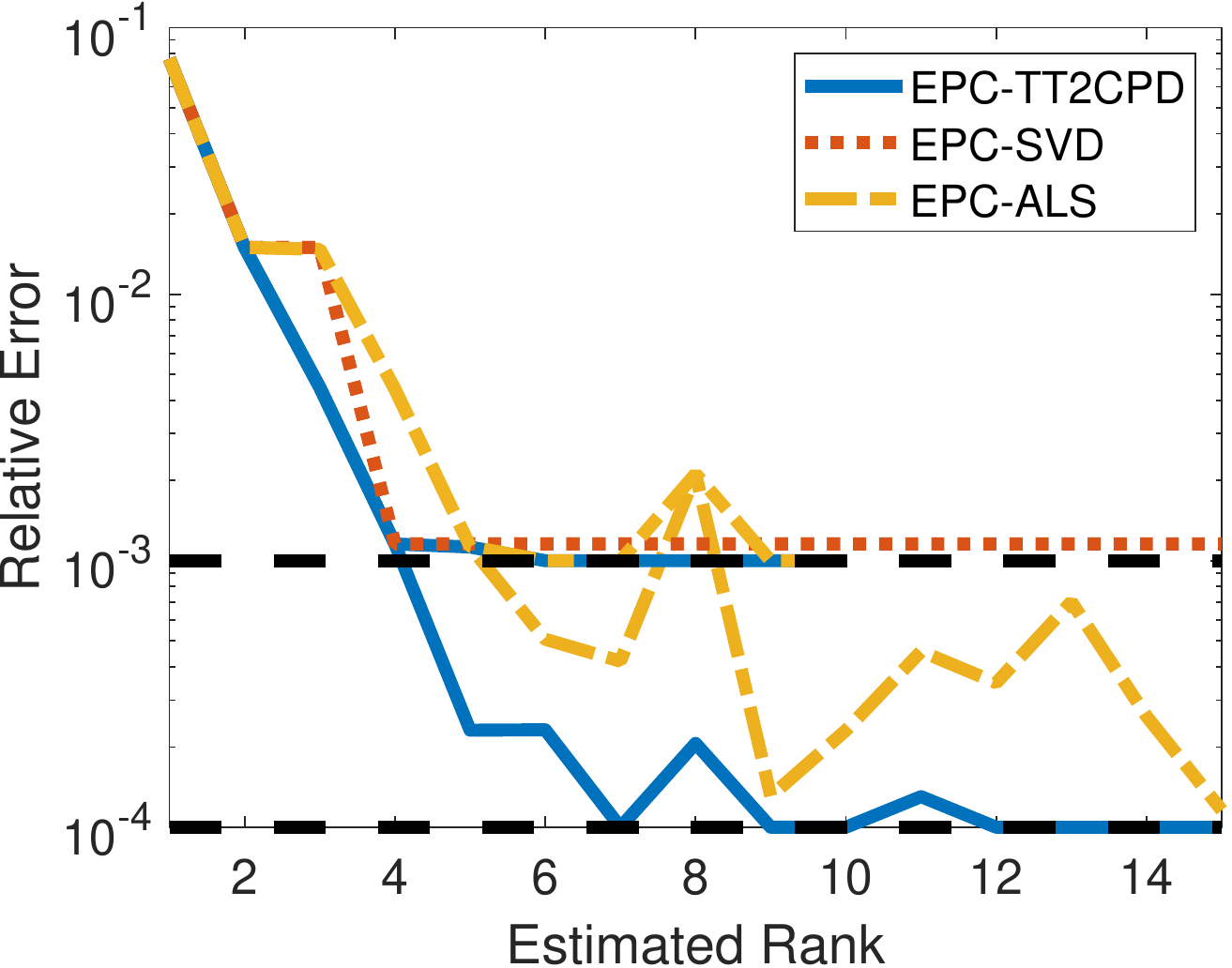}
\caption{Relative errors of the EPC using different initialization methods for decomposition of the Hilbert tensor of order 6.}\label{fig_apx_Hilbert6_epc}
\end{figure}

\section{Conclusions and Extensions}\label{sec:conclusion}

We have presented a novel application of the Tensor Train (TT) decomposition, a type of tensor networks to the calculation of Canonical Polyadic Decomposition (CPD) of higher-order tensors.
The proposed method has been shown to provide a general framework and include an exact conversion from TT-decomposition to CPD and an iterative algorithm to estimate CPD from a TT representation.
The proposed method can also be used to provide good initials for the constrained CPD.
%Similar conversion method can be derived for the constrained CPD.
Finally, a similar conversion can be derived from the tensor chain, a closed tensor network \cite{Espig2012,2018arXiv180710247L} to the CP shallow network. 
Simulation studies have verified the abilities of the proposed approach to both accurate estimation of the tensor rank and efficient computation of CPD of higher-order tensors, both long standing critical issues in tensor manipulation.

\appendices

\section{Computation of the Gradients for the Structured Kruskal Tensors}\label{sec:fastCP_structured_tensor}

Let $\bQ_n = [\tG_n]_{(2)}^T \bA^{(n)}$ be matrices of size $R_nR_{n+1} \times R$. From (\ref{eq_Un_tt_kt}), we have
\be
\bU^{(n) T}\bA^{(n)} &=  \left( \1_{R_{>(n+1)}} \otimes \bI_{R_nR_{n+1}} \otimes \1_{R_{<n}}  \right) \bQ_n  
=  \1_{R_{>(n+1)}} \otimes \bQ_n  \otimes \1_{R_{<n}} \,. \notag 
\ee
We next define matrices $\bPhi_{<n}$ and $\bPhi_{>n}$ of
sizes $R_1R_2\cdots R_n \times R$ and $R_{n+1}R_{n+2}\cdots R_N \times R$, respectively, as 
\be
\bPhi_{<n} &=& \bigCircleDast_{k =1}^{n-1} \left( \1_{R_{k+2:n}} \otimes \bQ_k  \otimes \1_{R_{<k}}\right) \notag \\
&=& \left(\bQ_{n-1} \otimes \1_{R_{<(n-1)}}\right) \*  \left(\1_{R_{n}} \otimes \bPhi_{<(n-1)}\right) \label{eq_psi_leftn} ,\\
\bPhi_{>n} &=& \bigCircleDast_{k = n+1}^{N} \left( \1_{R_{>k}} \otimes \bQ_k  \otimes \1_{R_{n+1:k-1}}\right) \notag \\
&=& \left(\1_{R_{>n+2}} \otimes \bQ_{n+1}\right) \*  \left( \bPhi_{>(n+1)} \otimes \1_{R_{n+1}} \right)\label{eq_psi_rightn}\,.
\ee
It then follows that
$\bigCircleDast_{k < n} \bU^{(k) T}\bA^{(k)} = \1_{R_{>n}} \otimes \bPhi_{<n}$ and $\bigCircleDast_{k > n} \bU^{(k) T}\bA^{(k)} = \bPhi_{>n}\otimes \1_{R_{<n+1}}$, 
and the Hadamard product of all-but-one matrices $\bU^{(k) T}\bA^{(k)}$ is equivalent to the Khatri-Rao product of the two matrices $\bPhi_{<n}$ and $\bPhi_{>n}$
\be
\left(\bigCircleDast_{k\neq n} \bU^{(k) T}\bA^{(k)} \right) &=& (\1_{R_{>n}} \otimes \bPhi_{<n}) \* (\bPhi_{>n} \otimes \1_{R_{<n+1}}) \notag \\
&=& \bPhi_{>n} \odot \bPhi_{<n} \,.
\ee
Now, we can rewrite the gradient in (\ref{eq_gradient_An}) by taking into account the linear dependence structure of $\bU^{(n)}$ in (\ref{eq_Un_tt_kt}) and
$\bI_{R_n R_{n+1}} = \bI_{R_{n+1}} \otimes \bI_{R_n}$, to give
\begin{align}
\frac{\partial D}{\partial \bA^{(n)}} &= \bU^{(n)} \left(\bigCircleDast_{k\neq n} \bU^{(k) T}\bA^{(k)} \right) \notag \\
&=
[\tG_{n}]_{(2)} \, (\1_{R_{>(n+1)}}^T \otimes \bI_{R_n R_{n+1}} \otimes \1_{R_{<n}}^T) (\bPhi_{>n} \odot \bPhi_{<n})\notag \\
&= [\tG_{n}]_{(2)} \, \left((\1_{R_{>(n+1)}}^T \otimes \bI_{R_{n+1}})  \bPhi_{>n}  \odot (\bI_{R_{n}} \otimes \1_{R_{<n}}^T)\bPhi_{<n}\right)\notag\,\\
&= [\tG_{n}]_{(2)} \, \left(\bPsi_{>n}  \odot \bPsi_{<n}\right)\, \label{eq_gradient_Un_2}
\end{align}
where the two matrices $\bPsi_{<n} = (\bI_{R_{n}} \otimes \1_{R_{<n}}^T)\bPhi_{<n}$ and $\bPsi_{>n} =(\1_{R_{>(n+1)}}^T \otimes \bI_{R_{n+1}})  \bPhi_{>n}$ are of size $R_n \times R$ and $R_{n+1} \times R$, respectively.
%\be
%\bPsi_{<n} &=& (\bI_{R_{n}} \otimes \1_{R_{<n}}^T)\bPhi_{<n} \label{eq_phi_leftn} \, ,\\
%\bPsi_{>n} &=& (\1_{R_{>(n+1)}}^T \otimes \bI_{R_{n+1}})  \bPhi_{>n} \label{eq_phi_rightn} \,.
%\ee

From (\ref{eq_psi_leftn}), each column $\bpsi^{(<n)}_r$, $r = 1, 2, \ldots, R$, of the matrix $\bPsi_{<n}$ can be represented as
\begin{align}
\bpsi^{(<n)}_r &=  (\bI_{R_{n}} \otimes \1_{R_{<n}}^T) \left( \left(\bq^{(n-1)}_{r} \otimes \1_{R_{<(n-1)}}\right) \*  \left(\1_{R_{n}} \otimes \bphi^{(<(n-1))}_{r}\right)\right) \notag \\
&=  \left((\bI_{R_{n}} \otimes \1_{R_{<n}}^T)\diag(\bq^{(n-1)}_{r}) \otimes \1_{R_{<(n-1)}}^T \right)\left(\1_{R_{n}} \otimes \bphi^{(<(n-1))}_{r}\right) \notag \\
&= (\bI_{R_{n}} \otimes \1_{R_{<n}}^T)\diag(\bq^{(n-1)}_{r}) \left(\bI_{R_{n-1}R_{n}} \otimes \1_{R_{<(n-1)}}^T \right)\left(\1_{R_{n}} \otimes \bphi^{(<(n-1))}_{r}\right) \notag \\
&= (\bI_{R_{n}} \otimes \1_{R_{<n}}^T)\diag(\bq^{(n-1)}_{r}) \left(\1_{R_n}\otimes \left(\bI_{R_{n-1}} \otimes \1_{R_{<(n-1)}}^T \right) \bphi^{(<(n-1))}_{r}\right) \notag \\
&= (\bI_{R_{n}} \otimes \1_{R_{<n}}^T)\diag(\bq^{(n-1)}_{r}) \left(\1_{R_n}\otimes \bphi^{(<(n-1))}_{r}\right) \notag\\
&= \bQ^{(n-1)\, T}_r \,   \bpsi^{(<(n-1))}_{r} \label{eq_psi_left_r1} \\
&= \tG_n \bar{\times}_1 \, \bpsi^{(<(n-1))}_{r}
\, \bar{\times}_2  \ba^{(n)}_r  \,  \label{eq_psi_left_r2}
\end{align}
where $\bQ^{(n-1)}_r$ is a matrix of size $R_{n-1} \times R_n$ and its vectorization is the $r$-th column of the matrix $\bQ_n$, i.e., $\vtr{\bQ^{(n-1)}_r} = \bq^{(n-1)}_{r}$.
The last two expressions form a recursive formula which can efficiently compute the matrices $\bPsi_{<n}$, and can be rewritten as
\be
    \bPsi_{<n} = [\tG_n]_{(3)} (\bA^{(n)} \odot \bPsi_{<(n-1)}) \, .\label{eq_psi_left_n_1}
\ee
Similarly, we can derive a recursive expression for the columns $\bpsi^{(>n)}_r$ of $\bPsi_{>n}$ as
\begin{align}
\bpsi^{(>n)}_r = (\1_{R_{>(n+1)}}^T \otimes \bI_{R_{n+1}}) \bphi^{(>n)}_r 
= \bQ^{(n+1)}_r \bpsi^{(>(n+1))}_r \notag
%Qan(:,:,r)*SKright_{n+1}(:,r);
\end{align}
and
\be
\bPsi_{>n} = \left[\tG_{n+1}   \right]_{(1)}    \left( \bPsi_{>(n+1)} \odot  \bA^{(n+1)} \right)\,.\label{eq_Psi_rightn}
\ee
We note that the matrices $\bPhi_{<n}$ and $\bPhi_{>n}$ vanish in the expressions for the computation of $\bPsi_{<n}$ in (\ref{eq_psi_left_n_1})
and $\bPsi_{>n}$ in (\ref{eq_Psi_rightn}). The gradients can be finally computed from $\bPsi_{<n}$ and  $\bPsi_{>n}$ as given in (\ref{eq_gradient_Un_2}).
%
% \be
%\frac{\partial D}{\partial \bA^{(n)}} = [\tG_{n}]_{(2)} \, \left(\bPsi_{>n}  \odot \bPsi_{<n}\right)\,
%\ee

 \section*{Acknowledgment}
 The first author wishes to thank L. De Lathauwer for the helpful discussion and his suggestion of the exact sequential conversion method. 
The work of A.H.P, A.C, I.O and S.A.A was supported by the Mega Grant project (14.756.31.0001).
\bibliographystyle{IEEEbib}
\bibliography{BIBTENSORS2018}

\begin{thebibliography}{10}

\bibitem{Hitchcock1927}
F.L. Hitchcock,
\newblock ``Multiple invariants and generalized rank of a $p$-way matrix or
  tensor,''
\newblock {\em Journal of Mathematics and Physics}, vol. 7, pp. 39--79, 1927.

\bibitem{Harshman72}
R.~A. Harshman,
\newblock ``Determination and proof of minimum uniqueness conditions for
  {PARAFAC1},''
\newblock {\em UCLA Working Papers in Phonetics}, vol. 22, 1972.

\bibitem{Carroll_Chang}
J.D. Carroll and J.J. Chang,
\newblock ``Analysis of individual differences in multidimensional scaling via
  an $n$-way generalization of {E}ckart--{Y}oung decomposition,''
\newblock {\em Psychometrika}, vol. 35, no. 3, pp. 283--319, 1970.

\bibitem{kruskal77}
J.B. Kruskal,
\newblock ``Three-way arrays: {R}ank and uniqueness of trilinear
  decompositions, with application to arithmetic complexity and statistics,''
\newblock {\em Linear Algebra Appl.}, vol. 18, pp. 95--138, 1977.

\bibitem{Sidiropoulos00Bro}
N.~Sidiropoulos, R.~Bro, and G.~Giannakis,
\newblock ``Parallel factor analysis in sensor array processing,''
\newblock {\em IEEE Transactions on Signal Processing}, vol. 48, no. 8, pp.
  2377--2388, 2000.

\bibitem{YeredorCaf}
A.~Yeredor,
\newblock ``Blind source separation via the second characteristic function,''
\newblock {\em Signal Processing}, vol. 80, no. 5, pp. 897--902, 2000.

\bibitem{Comon20062271}
P.~Comon and M.~Rajih,
\newblock ``Blind identification of under-determined mixtures based on the
  characteristic function,''
\newblock {\em Signal Processing}, vol. 86, no. 9, pp. 2271 -- 2281, 2006,
\newblock Special Section: Signal Processing in \{UWB\} Communications.

\bibitem{sorensenVDM}
M.~S{\o}rensen and L.~{De Lathauwer},
\newblock ``Blind signal separation via tensor decomposition with {V}andermonde
  factor. {P}art {I}: {C}anonical polyadic decomposition,''
\newblock {\em IEEE Transactions on Signal Processing}, vol. 61, no. 22, pp.
  5507--5519, 2013.

\bibitem{8421043}
Y.~Wu, H.~Tan, Y.~Li, J.~Zhang, and X.~Chen,
\newblock ``A fused cp factorization method for incomplete tensors,''
\newblock {\em IEEE Transactions on Neural Networks and Learning Systems}, pp.
  1--14, 2018.

\bibitem{8305626}
X.~Chen, Z.~Han, Y.~Wang, Q.~Zhao, D.~Meng, L.~Lin, and Y.~Tang,
\newblock ``A generalized model for robust tensor factorization with noise
  modeling by mixture of gaussians,''
\newblock {\em IEEE Transactions on Neural Networks and Learning Systems}, pp.
  1--14, 2018.

\bibitem{BayesCPD-TNNLS}
Q.~Zhao, G.~Zhou, L.~Zhang, A.~Cichocki, and S.~Amari,
\newblock ``Bayesian robust tensor factorization for incomplete multiway
  data,''
\newblock {\em IEEE Transactions on Neural Networks and Learning Systems}, vol.
  PP, no. 99, pp. 1--1, 2016.

\bibitem{8116755}
F.~Ju, Y.~Sun, J.~Gao, Y.~Hu, and B.~Yin,
\newblock ``Vectorial dimension reduction for tensors based on bayesian
  inference,''
\newblock {\em IEEE Transactions on Neural Networks and Learning Systems}, pp.
  1--14, 2018.

\bibitem{sid_adPARAFAC}
D.~Nion and N.D. Sidiropoulos,
\newblock ``{Adaptive Algorithms to Track the PARAFAC Decomposition of a
  Third-Order Tensor},''
\newblock {\em IEEE Transactions on Signal Processing}, vol. 57, no. 6, pp.
  2299--2310, June 2009.

\bibitem{Phan_tensordeflation_alg}
A.-H. Phan, P.~Tichavsk{\'y}, and A.~Cichocki,
\newblock ``Tensor deflation for {CANDECOMP/PARAFAC}. {Part 1}: {Alternating
  Subspace Update Algorithm},''
\newblock {\em IEEE Transaction on Signal Processing}, vol. 63, no. 12, pp.
  5924--5938, 2015.

\bibitem{DBLP:journals/corr/JaderbergVZ14}
M.~Jaderberg, A.~Vedaldi, and A.~Zisserman,
\newblock ``Speeding up convolutional neural networks with low rank
  expansions,''
\newblock {\em CoRR}, vol. abs/1405.3866, 2014.

\bibitem{DBLP:journals/corr/LebedevGROL14}
V.~Lebedev, Y.~Ganin, M.~Rakhuba, Ivan~V. Oseledets, and V.~S. Lempitsky,
\newblock ``Speeding-up convolutional neural networks using fine-tuned
  cp-decomposition,''
\newblock {\em CoRR}, vol. abs/1412.6553, 2014.

\bibitem{Strassen:1969:GEO:2722431.2722798}
V.~Strassen,
\newblock ``Gaussian elimination is not optimal,''
\newblock {\em Numer. Math.}, vol. 13, no. 4, pp. 354--356, Aug. 1969.

\bibitem{TICHAVSKY2017362}
Petr Tichavsk{\'y}, Anh-Huy Phan, and Andrzej Cichocki,
\newblock ``Numerical {CP} decomposition of some difficult tensors,''
\newblock {\em Journal of Computational and Applied Mathematics}, vol. 317, pp.
  362 -- 370, 2017.

\bibitem{Phan_FastALS}
A.-H. Phan, P.~Tichavsk{\'y}, and A.~Cichocki,
\newblock ``Fast alternating {LS} algorithms for high order {CANDECOMP/PARAFAC}
  tensor factorizations,''
\newblock {\em Signal Processing, IEEE Transactions on}, vol. 61, no. 19, pp.
  4834--4846, 2013.

\bibitem{doi:10.1137/14097968X}
N.~Vannieuwenhoven, K.~Meerbergen, and R.~Vandebril,
\newblock ``Computing the gradient in optimization algorithms for the cp
  decomposition in constant memory through tensor blocking,''
\newblock {\em SIAM Journal on Scientific Computing}, vol. 37, no. 3, pp.
  C415--C438, 2015.

\bibitem{Comon-ALS09}
P.~Comon, X.~Luciani, and A.~L.~F. de~Almeida,
\newblock ``Tensor decompositions, alternating least squares and other tales,''
\newblock {\em Journal of Chemometrics}, vol. 23, pp. 393--405, 2009.

\bibitem{Phan_FCP}
A.-H. Phan, P.~Tichavsk{\'y}, and A.~Cichocki,
\newblock ``{CANDECOMP/PARAFAC} decomposition of high-order tensors through
  tensor reshaping,''
\newblock {\em IEEE Transactions on Signal Processing}, vol. 61, no. 19, pp.
  4847--4860, 2013.

\bibitem{Bhaskara:2014:SAT:2591796.2591881}
A.~Bhaskara, M.~Charikar, A.~Moitra, and A.~Vijayaraghavan,
\newblock ``Smoothed analysis of tensor decompositions,''
\newblock in {\em Proceedings of the Forty-sixth Annual ACM Symposium on Theory
  of Computing}, New York, NY, USA, 2014, STOC '14, pp. 594--603, ACM.

\bibitem{doi:10.1137/16M1090132}
L.~Chiantini, G.~Ottaviani, and N.~Vannieuwenhoven,
\newblock ``Effective criteria for specific identifiability of tensors and
  forms,''
\newblock {\em SIAM Journal on Matrix Analysis and Applications}, vol. 38, no.
  2, pp. 656--681, 2017.

\bibitem{Klumper91}
A.~Klumper, A.~Schadschneider, and J.~Zittartz,
\newblock ``Equivalence and solution of anisotropic spin-1 models and
  generalized t-j fermion models in one dimension,''
\newblock {\em Journal of Physics A: Mathematical and General}, vol. 24, no.
  16, pp. L955, 1991.

\bibitem{Vidal03}
G.~Vidal,
\newblock ``Efficient classical simulation of slightly entangled quantum
  computations,''
\newblock {\em Physical Review Letters}, vol. 91, no. 14, pp. 147902, 2003.

\bibitem{OseledetsTT09}
I.V. Oseledets and E.E. Tyrtyshnikov,
\newblock ``Breaking the curse of dimensionality, or how to use {SVD} in many
  dimensions,''
\newblock {\em SIAM Journal on Scientific Computing}, vol. 31, no. 5, pp.
  3744--3759, 2009.

\bibitem{cichocki2016tensor}
A.~Cichocki, N.~Lee, I.~Oseledets, A.-H. Phan, Q.~Zhao, and D.~P Mandic,
\newblock ``Tensor networks for dimensionality reduction and large-scale
  optimization: Part 1 low-rank tensor decompositions,''
\newblock {\em Foundations and Trends{\textregistered} in Machine Learning},
  vol. 9, no. 4-5, pp. 249--429, 2016.

\bibitem{Phan_TT_part1}
A.-H. {Phan}, A.~{Cichocki}, A.~{Uschmajew}, P.~{Tichavsky}, G.~{Luta}, and
  D.~{Mandic},
\newblock ``Tensor networks for latent variable analysis. {Part I: Algorithms}
  for tensor train decomposition,''
\newblock {\em ArXiv e-prints}, 2016.

\bibitem{OseledetsTT11}
I.V. Oseledets,
\newblock ``Tensor-train decomposition,''
\newblock {\em SIAM Journal on Scientific Computing}, vol. 33, no. 5, pp.
  2295--2317, 2011.

\bibitem{CEM:CEM1206}
R.~Bro, R.~A. Harshman, N.~D. Sidiropoulos, and M.~E. Lundy,
\newblock ``Modeling multi-way data with linearly dependent loadings,''
\newblock {\em Journal of Chemometrics}, vol. 23, no. 7-8, pp. 324--340, 2009.

\bibitem{Sanchez1990}
E.~Sanchez and B.R. Kowalski,
\newblock ``Tensorial resolution: a direct trilinear decomposition,''
\newblock {\em Journal of Chemometrics}, vol. 4, pp. 29--45, 1990.

\bibitem{Bader_kolda}
B.W. Bader and T.G. Kolda,
\newblock ``Algorithm 862: {MATLAB} tensor classes for fast algorithm
  prototyping,''
\newblock {\em ACM Transactions on Mathematical Software}, vol. 32, no. 4, pp.
  635--653, 2006.

\bibitem{tensorbox}
A.H. Phan, P.~Tichavsk{\'y}, and A.~Cichocki,
\newblock ``{TENSORBOX: MATLAB package for tensor decomposition},'' 2012.

\bibitem{Phan_fLM}
A.-H. Phan, P.~Tichavsk{\'y}, and A.~Cichocki,
\newblock ``Low complexity damped {Gauss-Newton} algorithms for
  {CANDECOMP/PARAFAC},''
\newblock {\em SIAM Journal on Matrix Analysis and Applications}, vol. 34, no.
  1, pp. 126--147, 2013.

\bibitem{Phan_EPC}
A.-H. Phan, P.~Tichavsk\'{y}, and A.~Cichocki,
\newblock ``Error preserving correction: A method for {CP} decomposition at a
  target error bound,''
\newblock {\em arXiv preprint}, 2018.

\bibitem{Petr_CRIB}
P.~Tichavsk{\'y}, A.-H. Phan, and Z.~Koldovsk{\'y},
\newblock ``{Cram{\'e}r-Rao}-induced bounds for {CANDECOMP/PARAFAC} tensor
  decomposition,''
\newblock {\em IEEE Transactions on Signal Processing}, vol. 61, no. 8, pp.
  1986--1997, 2013.

\bibitem{cichocki2017tensor}
A.~Cichocki, A.-H. Phan, Q.~Zhao, M.~Lee, I.~Oseledets, M.~Sugiyama, and D.~P
  Mandic,
\newblock ``Tensor networks for dimensionality reduction and large-scale
  optimization: Part 2 applications and future perspectives,''
\newblock {\em Foundations and Trends{\textregistered} in Machine Learning},
  vol. 9, no. 6, pp. 431--673, 2017.

\bibitem{LathauwerTBSS}
L.~{D}e {L}athauwer,
\newblock ``Blind separation of exponential polynomials and the decomposition
  of a tensor in rank- $({L}_r,{L}_r,1)$ terms,''
\newblock {\em SIAM Journal on Matrix Analysis and Applications}, vol. 32, no.
  4, pp. 1451--1474, 2011.

\bibitem{2016arXiv161102357S}
Y.~{Song} and L.~{Qi},
\newblock ``{Infinite dimensional Hilbert tensors on spaces of analytic
  functions},''
\newblock {\em ArXiv e-prints}, Nov. 2016.

\bibitem{Sorber-tensorlab}
L.~Sorber, M.~{Van Barel}, and L.~{De Lathauwer},
\newblock ``{Tensorlab v1.0},'' Feb. 2013.

\bibitem{Espig2012}
M.~Espig, K.~K. Naraparaju, and J.~Schneider,
\newblock ``A note on tensor chain approximation,''
\newblock {\em Computing and Visualization in Science}, vol. 15, no. 6, pp.
  331--344, Dec 2012.

\bibitem{2018arXiv180710247L}
Y.~{Ling}, Y.~{Liu}, Z.-Y. {Xian}, and Y.~{Xiao},
\newblock ``{Tensor chain and constraints in tensor networks},''
\newblock {\em ArXiv e-prints}, July 2018.

\end{thebibliography}

\end{document}